\numberwithin{equation}{section}
\newtheorem{theorem}{Theorem}[section]
\newtheorem{lemma}[theorem]{Lemma}
\newtheorem{proposition}[theorem]{Proposition}
\newtheorem{corollary}[theorem]{Corollary}
\theoremstyle{definition}
\newtheorem{example}[theorem]{Example}
\newtheorem{remark}[theorem]{Remark}
\newtheorem{definition}[theorem]{Definition}
\newtheorem*{ack}{Acknowledgements}
\newcommand{\be}{\begin{equation}}
\newcommand{\ee}{\end{equation}}
\newcommand{\bes}{\begin{equation*}}
\newcommand{\ees}{\end{equation*}}
\newcommand{\cG}{\mathcal{G}}
\newcommand{\cH}{\mathcal{H}}
\newcommand{\cM}{\mathcal{M}}
\newcommand{\cU}{\mathcal{U}}
\newcommand{\bB}{\mathbb{B}}
\newcommand{\bC}{\mathbb{C}}
\newcommand{\bD}{\mathbb{D}}
\newcommand{\bN}{\mathbb{N}}
\newcommand{\bR}{\mathbb{R}}
\newcommand{\Aut}{\operatorname{Aut}}
\newcommand{\Mult}{\operatorname{Mult}}
\newcommand{\mlt}{\operatorname{Mult}}
\newcommand{\spn}{\operatorname{span}}
\newcommand{\fM}{{\mathfrak{M}}}
\begin{document}

\title{Distance between reproducing kernel Hilbert spaces and geometry of finite sets in the unit ball}

\author{Danny Ofek}
\address{School of Mathematical Sciences\\
Tel Aviv University\\
Tel Aviv\; 6997801\\
Israel}
\email{Dannyofek@mail.tau.ac.il}

\author{Satish K. Pandey}
\address{Faculty of Mathematics\\
Technion - Israel Institute of Technology\\
Haifa\; 3200003\\
Israel}
\email{satishpandey@campus.technion.ac.il}
\urladdr{\href{http://noncommutative.space/}
{\url{http://noncommutative.space/}}}


\author{Orr Moshe Shalit}
\address{Faculty of Mathematics\\
Technion - Israel Institute of Technology\\
Haifa\; 3200003\\
Israel}
\email{oshalit@technion.ac.il}
\urladdr{\href{https://oshalit.net.technion.ac.il/}
{\url{https://oshalit.net.technion.ac.il/}}}

\thanks{The work of Satish K. Pandey is supported in part at the Technion by a fellowship of the Israel Council for Higher Education. The work of O.M. Shalit is partially supported by ISF Grant no. 195/16}
\subjclass[2010]{46E22}
\keywords{Reproducing kernel Hilbert spaces, multiplier algebras, reproducing kernel Banach-Mazur distance, multiplier Banach-Mazur distance}
\begin{abstract} 
In this paper we study the relationships between a reproducing kernel Hilbert space, its multiplier algebra, and the geometry of the point set on which they live. 
We introduce a variant of the Banach-Mazur distance suited for measuring the distance between reproducing kernel Hilbert spaces, that quantifies how far two spaces are from being isometrically isomorphic as reproducing kernel Hilbert spaces. 
We introduce an analogous distance for multiplier algebras, that quantifies how far two algebras are from being completely isometrically isomorphic. 
We show that, in the setting of finite dimensional quotients of the Drury-Arveson space, two spaces are ``close" to one another if and only if their multiplier algebras are ``close", and that this happens if and only if one of the underlying point sets is close to an image of the other under a biholomorphic automorphism of the unit ball. 
These equivalences are obtained as corollaries of quantitative estimates that we prove. 
\end{abstract}

\maketitle

\section{Introduction}

The objective of this paper is to study the relationship between the structure of a reproducing kernel Hilbert space (RKHS), the structure of its multiplier algebra, and the geometry of the underlying set. 
We shall use freely basic facts on RKHSs which can be found in standard introductions; see \cite{AMBook, Paulsen-Raghupati-Book2016}. 
This continues a standing research program on the  classification of complete Pick spaces and algebras; see \cite{APV2003,ARS2008,Davidson-Hartz-Shalit2015,Davidson-Ramsey-Shalit-2011,Davidson-Ramsey-Shalit-2015,Hartz2012,Hartz2017,HL2018,KerMcSh2013,McSh2017,OS2020,Salomon-Shalit2016}. 
More generally, our study fits into a general theme in the geometric study of RKHSs; see, e.g., \cite{ARSW2011,CD78,Mc96,Rochberg86} for a sample of results, or \cite{Rochberg2017,Rochberg2019,Rochberg2020,Salomon-Shalit-Shamovich2018,Salomon-Shalit-Shamovich2020} for several recent examples in the commutative and the noncommutative setting. 

\subsection{Background on the isomorphism problem}\label{subsec:background}
For $d \in \bN \cup \{\infty\}$, the Drury-Arveson space $H^2_d$ is the RKHS on the open unit ball $\bB_d$ of a complex $d$-dimensional Hilbert space, determined by the kernel $k(z,w) = \frac{1}{1 - \langle z, w \rangle}$. 
We write $\cM_d$ for the multiplier algebra $\Mult(H^2_d)$ of $H^2_d$. 
The space $H^2_d$ can be considered as a module over $\cM_d$ and plays a prominent role in the theory of {\em Hilbert modules}  (see the survey \cite{ShalitSurvey2014}). 
A subspace of the form $\cH_X = \overline\spn\{k_\lambda: x \in X\} = H^2_d\big|_X$, where $X \subseteq \bB_d$, is called a {\em quotient} of $H^2_d$ (the reason for this terminology is that $\cH_X$ can be naturally identified with the quotient of $H^2_d$ by the submodule of functions vanishing on $X$).

By Theorem 4.2 in \cite{AM2000}, every complete Pick RKHS is isometrically isomorphic as an RKHS to (i.e., is a rescaling of) a quotient space of the Drury-Arveson space. 
Thus, the study of complete Pick spaces and their multiplier algebras can to a large extent be carried out for quotients of $H^2_d$. 
It follows from the complete Pick property that $\cM_X  := \Mult(\cH_X) = \cM_d\big|_X$. 
By Proposition 2.2 in \cite{Davidson-Ramsey-Shalit-2015}, it suffices to consider only $X$ which are {\em multiplier varieties}, which means that $X$ is the joint zero set of an ideal of multipliers in $\cM_d$. 

With the comments in the last paragraph in mind, it is natural to ask in what way exactly does a multiplier variety $V \subseteq \bB_d$ determine the structure of $\cH_V$ and of $\cM_V$. 
By Theorems 4.4 and 5.10 in \cite{Davidson-Ramsey-Shalit-2015}, $\cM_V$ is isometrically isomorphic to $\cM_W$ if and only if $V$ and $W$ are {\em congruent}, meaning that there is a biholomorphic automorphism $\phi$ of $\bB_d$ such that $\phi(V) = W$, and this happens if and only if the multiplier algebras are completely isometrically isomorphic. 
The methods employed (e.g., \cite[Proposition 4.1]{Davidson-Ramsey-Shalit-2015}) also show that $\cH_V$ is isometrically isomorphic as a RKHS to $\cH_W$ if and only $V$ and $W$ are congruent. 

Next, one can ask: if $\cM_V$ and $\cM_W$ are merely isomorphic, how are then $V$ and $W$ related? 
By \cite[Theorem 5.6 ]{Davidson-Ramsey-Shalit-2015} under some reasonable regularity assumptions on $V$ and $W$, an isomorphism from $\cM_V$ onto $\cM_W$ implies the existence of a biholomorphism from $W$ onto $V$. 
Moreover, by \cite[Theorem 6.2]{Davidson-Hartz-Shalit2015} this biholomorphism must be bi-Lipschitz with respect to the pseudohyperbolic metric. 
Under some additional assumptions, a biholomorphism from $W$ onto $V$ gives rise to an isomorphism from $\cM_V$ to $\cM_W$; this has been proved, e.g., when the varieties are homogeneous \cite{Davidson-Ramsey-Shalit-2011} or finite Riemann surfaces \cite{KerMcSh2013}. 

\subsection{What this paper is about}

This paper is strongly inspired by two recent papers of Rochberg, \cite{Rochberg2017} and \cite{Rochberg2019}. 
In \cite{Rochberg2017}, a certain metric induced by RKHSs is used to show that the Dirichlet space is not a quotient of the Drury-Arveson space $H^2_d$ for finite $d$ (it is worth noting that the same methods can be modified to show that the Dirichlet space is not even boundedly isomorphic to a quotient of $H^2_d$ for finite $d$ \cite{Hartz16p}). 
In \cite{Rochberg2019}, the special case of complete Pick spaces on finitely many points is carefully investigated;  the natural metric and additional geometric constructions are studied, and shown to hold much information on the related function spaces. 
In particular it is shown in \cite[Theorem 7]{Rochberg2019} that if $X_1$ and $X_2$ are finite sets of points in the unit ball, then $\cH_{X_1}$  and $\cH_{X_2}$ are isometrically isomorphic as RKHSs if and only if $X_1$ and $X_2$ are congruent.

In this paper we follow Rochberg in focusing on finite sets of points $X_1, X_2$ considered as metric spaces, but here we are most interested in the question {\em what happens when $X_1$ and $X_2$ are not exactly congruent, but are ``close" to being so}. 
Are then $\cH_{X_1}$ and $\cH_{X_2}$ forced to be, in some sense, ``close" to being isometrically isomorphic as reproducing kernel Hilbert spaces? 
And conversely, if the function spaces are ``close" to being isometrically isomorphic in some sense, are the sets then ``close" to be being biholomorphic images one of the other, or ``close" to being isometric? 
Similarly, we are interested in analogous questions of how the multiplier algebras, rather than the RKHSs, are determined by the underlying sets and their geometry, and how small changes in the structure of the underlying sets are reflected in small changes in the structure of the algebra. 

In other words, this paper studies in a quantitative manner the relationship between the the structure of function spaces and operator algebras on sets, on the one hand, and the geometry of the underlying sets, on the other hand. 

\subsection{Main results}
To present the main results of this paper we need to briefly introduce some more terminology. 
Suppose that $\rho_{\rm{ph}}$ is the pseudohyperbolic metric on the unit ball $\bB_d$. 
This metric induces the Hausdorff metric $\rho_H$ on subsets of the ball (see Definition \ref{def:Hausdorff-Symmetric-Distances}), which in turn gives rise to an automorphism invariant Hausdroff distance between subsets
\[
\tilde{\rho}_{H}(X,Y) =\inf\left\{\rho_{H}(X, \Phi(Y)):\Phi\in\Aut(\mathbb{B}_d)\right\};
\]
see Definition \ref{def:automorphism-invariant-Hausdorff-Symmetric-Distances}.
We introduce a distance function $\rho_{RK}$ which is analogous to the Banach-Mazur distance, defined as follows
\[
\rho_{RK}(\cH_1,\cH_2) = \log \left(\inf\left\{\|T\| \|T^{-1}\| : T : \cH_1 \to \cH_2 \textrm{ is an RKHS isomorphism}\right\} \right) .
\]
This distance function quantifies how far two spaces are from being isometrically isomorphic as reproducing kernel Hilbert spaces. 
Similarly, we introduce a distance function $\rho_{M}$, defined via a similar formula, that quantifies how far two multiplier algebras are from being completely isometrically isomorphic as multiplier algebras (see Definitions \ref{def:RKBMdist} and \ref{def:BMmultdist}). 
Most of this paper is concerned with studying the reltionships between these distance functions.

Our principal theorem, Theorem \ref{thm:main}, says roughly that for two subsets $X = \{x_1, \ldots, x_n\}$ and  $Y = \{y_1, \ldots, y_n\}$ of the unit ball $\bB_d$ we have that $\tilde{\rho}_H(X,Y)$ is small if and only if $\rho_{RK}(\cH_X,\cH_Y)$ is small, and this happens if and only if $\rho_{M}(\cM_X,\cM_Y)$ is small.  

Theorem \ref{thm:main} is a direct consequence of explcit inequalities between the different metrics, which are the heart of this paper.
Sections \ref{sec:distance_spaces} and \ref{sec:spaces} contain the quantitative versions of each one of the implications in the theorem; see Propositions \ref{prop:BMspace_mult}, \ref{prop:BMspace_mult_converse}, \ref{prop:set_RKHS} and \ref{prop:RKHS_set}. 
These provide us with quantitative versions of results on the isomorphism problem that were summarized in Subsection \ref{subsec:background}. 
We hope that such quantitative results will eventually help to resolve open problems on isomorphisms of infinite dimensional quotients of the Drury-Arveson space. 

\begin{ack}
We would like to thank the referee for numerous helpful suggestions to enhance the organization of our ideas and improve the exposition of this article.
\end{ack}

\section{Preliminaries}\label{sec:pre}

We begin by recalling several notions from the theory of reproducing kernel Hilbert spaces. 
Let $\mathcal{H}$ be a reproducing kernel Hilbert space on a set $X$ with reproducing kernel $K$. 
We use $\Mult(\mathcal H)$ to denote the multiplier algebra of $\mathcal H$. 
By a \emph{kernel function} we mean a two-variable function $K:X\times X\to \mathbb{C}$ on a nonempty set $X$ that satisfies the following property: for every $n\in \mathbb{N}$ and for every choice of $n$ distinct points $\{x_1,...,x_n\}\subseteq X$, the matrix $[K(x_i,x_j)]$ is positive semidefinite. We will use the notation $K\geq 0$ to denote that the function $K$ is a kernel function. 
If $\mathcal{H}$ is an RKHS on $X$ with reproducing kernel $K$, then $K$ is, indeed, a kernel function. 
The element $k_y = K(\cdot,y) \in \cH$ is called \emph{the reproducing kernel at $y\in X$}.

\subsection{Notions of isomorphism for reproducing kernel Hilbert spaces}
In the study of reproducing kernel Hilbert spaces, it is useful to have notions for comparing two spaces and deciding whether or not and in what sense they are alike (see \cite[Section 2]{McSh2017} or \cite[Section 2.6]{AM2000}). 
Since the kinds of questions we ask are somewhat new, it is convenient for us to introduce some terminology, which is not completely aligned with what finds in the literature. 

For $i=1,2$, let $\mathcal H_i$ be reproducing kernel Hilbert spaces
respectively on sets $X_i$ with reproducing kernels $K_i(x,y)=k^i_y(x)$, and let $\mathcal M_i=\Mult(\mathcal H_i)$ be their multiplier algebras. 
The space $\mathcal{H}_2$ is said to be a \emph{rescaling} of $\mathcal H_1$ if there is a bijection $F:X_1\to X_2$ and a nowhere-vanishing complex valued function $x \mapsto \lambda_x$ defined on $X_1$ such that 
\[
K_1(x,y)= \overline{\lambda_x}\lambda_y K_2(F(x), F(y)) \,\,\text{ for all }\,\, x,y\in X_1.
\]
This happens if and only if there exists a unitary $U:\mathcal H_1\to \mathcal H_2$ that maps each one-dimensional space $\mathbb C k^1_x$ onto $\mathbb C k^2_{F(x)}$, that is, there exists a nowhere-vanishing complex valued function $x \mapsto \lambda$ defined on $X_1$ such that 
\[
Uk^1_x = \lambda_x k^2_{F(x)} \,\,\,\,\text{ for all }\,\,\,\, x\in X_1.
\]
We shall refer to the unitary $U$ as an \emph{isometric isomorphism of reproducing kernel Hilbert spaces}, and say that $\mathcal H_1$ and $\mathcal H_2$ are \emph{isometrically isomorphic as reproducing kernel Hilbert spaces}. 
The point of this definition is that spaces which are isometrically isomorphic as RKHSs can be considered to be the same in every respect: the unitary between them preserves the function theoretic structure, in the sense that point evaluations are sent to point evaluations. 
While there is always a unitary map from one Hilbert space onto another of the same dimension, it is much harder for two RKHSs to be isometrically isomorphic as RKHSs (simple examples of pairs non isometrically isomorphic RKHSs of the same dimension follow very concretely from Proposition \ref{prop:RKHS_set} below).

We are interested in understanding a coarser and more flexible structure on RKHSs. An {\em isomorphism of reproducing kernel Hilbert spaces} from $\cH_1$ to $\cH_2$ (or simply an \emph{RKHS isomorphism}) is a bijective bounded linear map $T : \cH_1 \to \cH_2$ defined by  
\be\label{eq:Tiso}
T (k^1_x) = \lambda_x k^2_{F(x)} \,\,, \,\,  x \in X_1, 
\ee
where $x \mapsto \lambda_x$ is a nowhere-vanishing complex valued function and $F: X_1 \to X_2$ is a bijection. 
If there is an RKHS isomorphism from $\cH_1$ to $\cH_2$, then clearly there is an inverse which too is an RKHS isomorphism, and we then say that $\cH_1$ and $\cH_2$ are {\em isomorphic as RKHSs}, or simply that they are {\em isomorphic}.  
The point is that the linear isomorphism between the spaces preserves point evaluations, and hence the function theoretic structure. 

It is perhaps worth to warn the reader that, as far as we know, this notion of isomorphism between RKHSs --- the one described above via Equation (\ref{eq:Tiso}) --- is not commonly studied, and we were led to introduce it in order to avoid ambiguity and maintain the consistency in our definitions so that we could keep the logical flow of the paper smooth. In this regard, our terminologies differ from other places. For instance, in \cite[Section 2]{McSh2017}, due to reasons explained therein, the phrase ``isomorphic RKHSs" is used differenty and it refers to what we call isometrically isomorphic RKHSs. 
We also stress that the notion of ``isometric isomorphism" as defined above is precisely the same as that of ``rescaling"; we prefer this terminology because we also need to consider a weaker notion of isomorphism, the ``RKHS isomorphism", and this choice helps us emphasize the difference.

\subsection{Notions of isomorphism for multiplier algebras}\label{subsec:isomult}
Next, we want to consider what an isomorphism of multiplier algebras should be. First, to be called a {\em morphism} between multplier algebras, a map $\varphi : \cM_1 \to \cM_2$ will be required to preserve the algebraic structure, that is, to be an algebra homomorphism. 
Additionally, since multiplier algebras are algebras of functions on sets, a natural requirement for a morphism in this category is that point evaluations in $\cM_2$ are pulled back to point evaluations in $\cM_1$. 
Concretely, this means that $\varphi$ is implemented by composition with a function $G : X_2 \to X_1$, i.e. $\varphi(f) = f \circ G$.
Note that this isn't actually a separate assumption as every such map must be a homomorphism.  
Conversely, there are certain conditions under which an isomorphism of multiplier algebras can only arise as a composition with a bijection; this is the case in the finite dimensional case considered in this paper, and also holds more generally (see \cite{Salomon-Shalit2016}).

Finally, the multiplier algbera $\cM_i$ can identified with a concrete operator algebra on $\cH_i$ by identifying every multiplier $f \in \cM_i = \Mult(\cH_i)$ with the multiplication operator $M_f : h \mapsto fh$, $h \in \cH_i$. 
Recall that given a map $\varphi : \cM_1 \to \cM_2$ between operator algebras, the {\em completely bounded norm} (or {\em cb norm}, for short) is defined to be $\|\varphi\|_{cb} = \sup_n \|\varphi^{(n)}\|$, where $\varphi^{(n)}$ is the map $\varphi^{(n)} : M_n(\cM_1) = \cM_1 \otimes M_n \to M_n(\cM_2) = \cM_2 \otimes M_n$ between the matrices over $\cM_1$ and $\cM_2$, given by $\varphi^{(n)} = \varphi \otimes {\bf id}_{M_n}$ (see \cite{Pau02}). 
A map $\varphi$ is said to be {\em completely bounded} if $\|\varphi\|_{cb}<\infty$ and {\em completely isometric} if $\varphi^{(n)}$ is isometric for all $n$. 
In the theory of (not-necessarily selfadjoint) operator algebras, experience has taught us that the natural morphisms are the {\em completely bounded homomorphisms}. 
We say that $\varphi$ is a {\em complete isomorphism} if it is a bijective completely bounded isomorphism with a completely bounded inverse. 
In the finite dimensional case every isomorphism is a complete isomorphism, but keeping in mind the appropriate notion of morphisms can serve to guide us to discover the correct theorems and their proofs. 

After the above discussion on what are natural conditions that an isomorphism of multiplier algebras should have, we define a {\em multiplier algebra isomorphism} between multiplier algebras $\cM_1 = \Mult(\cH_1)$ and $\cM_2 = \Mult(\cH_2)$ to be a complete isomorphism $\varphi: \cM_1 \to \cM_2$ that is implemented as
\[
\varphi(f) = f \circ G \, \, , \,\, f \in \cM_1, 
\]
where $G : X_2 \to X_1$ is a bijection. 
If such an isomorphism exists then we say that $\cM_1$ and $\cM_2$ are {\em isomorphic as multiplier algebras}. 
If $\varphi$ is completely isometric then we say that $\cM_1$ and $\cM_2$ are {\em completely isometrically isomorphic as multiplier algebras}. 

As a testimony to the naturality of the the above definition, we note that it is not hard to see that if $T : \cH_1 \to \cH_2$ is an isomorphism of RKHS  then the map $M_f \mapsto (T^*)^{-1} M_f T^*$ induces an isomorphism of multiplier algebras, $\varphi : \cM_1 \to \cM_2$ (see Proposition \ref{prop:BMspace_mult}).
The paper \cite{Davidson-Hartz-Shalit2015} contains a continuum of natural weighted Hardy spaces on the unit disc such that their multiplier algebras are non-isomorphic. 
It follows therefore that these spaces are not isomorphic as RKHSs.

Furthermore, if $T$ is an isometric isomorphism of RKHSs, then the induced isomorphism $\varphi : \cM_1 \to \cM_2$ is a completely isometric isomorphism of multiplier algebras.
On the other hand, a completely isometric isomorphism of multiplier algebras need not imply the existence of an isometric isomorphism between the spaces.
Indeed, the Hardy space and the Bergman spaces on the unit disc both have $H^\infty$ as their multiplier algebra, and in fact the identity map from $H^\infty$ to itself is a completely isometric isomorphism that is implemented by composition with the identity map on the disc. 
However, the Hardy and Bergman spaces on the disc are not isometrically isomorphic as RKHSs (to see this one can use the fact that the Hardy space has the complete Pick property while the Bergman space does not, see \cite{AMBook}).

\subsection{Some useful facts about isomorphisms of RKHSs and multiplier algebras on finite subsets of the ball}\label{subsec:usefulfacts}

We record some simple observations on the spaces $\cH_X = H^2_d\big|_X$ and their multiplier algebras $\cM_X$ as defined in the introduction, where $X \subseteq \bB_d$ is finite. 
As vector spaces both $\cH_X$ and $\cM_X$ can be identified with the space of complex functions on $X$. 

Two such finite dimensional spaces $\cH_X$ and $\cH_Y$ are isomomorphic as RKHSs if and only their multiplier algebras are isomorphic as multiplier algebras, and this happens precisely when $X$ and $Y$ have the same cardinality. 
Indeed, if $F: Y \to X$ is a bijection, then $k_y \mapsto k_{F(y)}$ extends to an isomorphism of RKHSs. 
If $T : \cH_X \to \cH_Y$ is an isomorphism of RKHSs, then we noted two paragraphs ago that conjugation with the adjoint of $T$ gives rise to an isomorphism of multiplier algebras $\varphi : \cM_X \to \cM_Y$ (note that in the finite dimensional setting all linear maps are bounded, and in fact completely bounded). 
Finally, every isomorphism (in the mere algebraic sense) $\varphi: \cM_1 \to \cM_2$ gives rise to a bijection $F : Y \to X$ that induces it. 
For completeness, we explain this below. 

We let $\fM(\cM_X)$ denote the maximal ideal space of $\cM_X$.
The maximal ideal space plays an important role in the isomorphism problem \cite{Salomon-Shalit2016}, but in the case of general subsets $X$ in the ball it can be quite a wild and unwieldy topological space. 
Luckily, in the case of finite sets all of the subtleties disappear. 
For every $x \in X$ there is a multiplicative linear evaluation functional $\pi_x \in \fM(\cM_X)$ given by 
\[
\pi_x(f) = f(x). 
\]
It is easy to see that $\fM(\cM_X) = \{\pi_x : x \in X\}$, and therefore we can identify $\fM(\cM_X)$ with $X$ by identifying the evaluation functional $\pi_x$ at a point $x$ with the point $x$. 

Suppose that $Y \subseteq \bB_d$ is a finite set, and that $F : Y \to X$ is a function. 
Then we can define $\varphi : \cM_X \to \cM_Y$ by 
\[
\varphi(f) = f \circ F
\]
for all $f \in \cM_X$. 
Clearly, $\varphi$ is a unital homomorphism, which is an isomorphism if and only if $F$ is bijective. 
Conversely, if $\varphi : \cM_X \to \cM_Y$ is a unital homomorphism, then we can define a map $\varphi^* : \fM(\cM_Y) \to \fM(\cM_X)$ by 
\[
\varphi^*(\pi) = \pi \circ \varphi
\]
for all $\pi \in \fM(\cM_Y)$. 
Since every multiplicative linear functional on $\cM_X$ can be identified with a unique point in the underlying set $X$, we obtain a map $F : Y \to X$. 
We leave the reader with the straightforward task of verifying that the maps $F \mapsto \varphi$ and $\varphi \mapsto F$ defined above are mutual inverses and in particular, if $F$ is the map determined by $\varphi^*$, then $\varphi(f) = f\circ F$.

\subsection{The pseudohyperbolic metric} 

\begin{definition}\label{def:phdistance}
The \emph{pseudohyperbolic metric} $\rho_{\rm{ph}}$ on the open unit ball $\bB_d$ is given by 
\[
\rho_{\rm{ph}}(z,w):=\left\|\Psi_w(z)\right\| = \left\|\Psi_z(w)\right\|, \hspace{0.5cm} z,w\in \mathbb{B}_d, 
\]
where $\Psi_w$ is the elementary automorphism 
\[
\Psi_w(z) = \frac{w - P_w z - (1-\|w\|^2)^{1/2} P_w^\perp z}{1 - \langle z, w \rangle} 
\]
that maps $w$ to $0$ and $0$ to $w$.
Here $P_w$ is the orthogonal projection onto the span of $w$ and $P_w^\perp = I-P_w$ (see \cite[Section 2.2]{RudinUnitBall}). In particular, when $d=1$, we have 
\[
\Psi_w(z)=\frac{w-z}{1-\overline wz} \,\, \text{ and } \,\, \rho_{\rm{ph}}(z,w) = \left | \frac{w-z}{1-\overline wz} \right |.
\]
\end{definition}
The pseudohyperbolic metric and the Euclidean metric are not equivalent, but they are equivalent on every ball $r \bB_d$ of radius strictly less than $1$. 
Therefore, when working in a compact neighborhood of a fixed finite subset of the ball it essentially makes no difference whether we work with the Euclidean or the pseudohyperbolic metric. 
The pseudohyperbolic metric is better suited to study the function spaces of interest to us, but sometimes it is convenient to use the Euclidean metric, and we shall make use of both.

\section{Distances between subsets of the ball}
In this section we will discuss several ways in which one can measure distances between subsets of the ball, and we shall record a few useful facts about them.

In a metric space $(M,\rho)$, we define, for a set $F \subseteq M$ and a point $x \in M$, the distance from $x$ to $F$ by $\rho(x,F) = \inf \{\rho(x,y) : y \in F\}$.

\begin{definition}\label{def:Hausdorff-Symmetric-Distances}
Let $X,Y$ be two sets in a metric space $(M,\rho)$. 
The {\em Hausdorff distance} (induced by $\rho$) between $X$ and $Y$ is defined to be
\[
\rho_{H}(X,Y) = \max\left\{\max_{x \in X} \rho(x,Y), \max_{y \in Y} \rho(y,X)  \right\}. 
\]
For $X = \{x_1, \ldots, x_n\}$ and $Y = \{y_1, \ldots, y_n\}$ we define their {\em symmetric distance} to be 
\[
\rho_s(X,Y) = \min_{\sigma \in S_n} \max\{\rho(x_i,y_{\sigma(i)}) : i=1, \ldots, n\} , 
\]
where $S_n$ is the symmetric group on $\{1, \ldots, n\}$. 
\end{definition}

\begin{lemma}\label{lemma:Hausdorff-leq-symmetric}
If $X = \{x_1, \ldots, x_n\}$ and $Y = \{y_1, \ldots, y_n\}$ are subsets of a metric space $(M,\rho)$, then $\rho_{H}(X,Y)\leq \rho_s(X,Y)$.
\end{lemma}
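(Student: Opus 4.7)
The plan is to fix an arbitrary permutation $\sigma\in S_n$ and bound the Hausdorff distance by $\max_i \rho(x_i, y_{\sigma(i)})$, then take the minimum over $\sigma$.

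First I would observe that for each index $i$, since $y_{\sigma(i)} \in Y$, we have
\[
\rho(x_i, Y) = \inf_{y \in Y} \rho(x_i, y) \leq \rho(x_i, y_{\sigma(i)}).
\]
Taking the maximum over $i$ yields $\max_i \rho(x_i, Y) \leq \max_i \rho(x_i, y_{\sigma(i)})$.

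Next, for the symmetric direction, I would reindex: given $j$, set $i = \sigma^{-1}(j)$, so that $y_j = y_{\sigma(i)}$, and since $x_i \in X$,
\[
\rho(y_j, X) \leq \rho(y_j, x_i) = \rho(y_{\sigma(i)}, x_i).
\]
Taking the maximum over $j$ (equivalently, over $i$) gives $\max_j \rho(y_j, X) \leq \max_i \rho(x_i, y_{\sigma(i)})$. Combining the two bounds and using the definition of $\rho_H$,
\[
\rho_H(X,Y) = \max\bigl\{\max_i \rho(x_i, Y),\ \max_j \rho(y_j, X)\bigr\} \leq \max_i \rho(x_i, y_{\sigma(i)}).
\]
Finally, taking the minimum over all $\sigma \in S_n$ yields $\rho_H(X,Y) \leq \rho_s(X,Y)$, as desired. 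There is no real obstacle here; the proof is purely combinatorial and follows directly from the definitions, the only subtlety being the bookkeeping that a single permutation simultaneously controls both of the one-sided Hausdorff terms.
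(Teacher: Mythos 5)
Your proof is correct and follows essentially the same route as the paper's: bound each one-sided term $\max_i\rho(x_i,Y)$ and $\max_j\rho(y_j,X)$ by $\max_i\rho(x_i,y_{\sigma(i)})$ for an arbitrary $\sigma$, then minimize over $\sigma$. The only difference is that you spell out the reindexing $i=\sigma^{-1}(j)$ for the second term, which the paper dispatches with a ``similarly.''
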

\begin{proof}
Given $x_i\in X$, observe that for each permutation $\sigma\in S_n$ we have $$\rho(x_i,Y)\leq \rho(x_i,y_{\sigma(i)}),$$ and hence, taking maximum over all $i$'s, we obtain  
$$\max_{i\in \{1,...,n\}}\rho(x_i,Y)\leq \max_{i\in \{1,...,n\}}\rho(x_i,y_{\sigma(i)}).$$ Since the above inequality holds for every permutation in $S_n$, we get 
\begin{equation}
\max_{i}\rho(x_i,Y)\leq \min_{\sigma\in S_n}\max_{i}\rho(x_i,y_{\sigma(i)})=\rho_s(X,Y).
\end{equation} 
Similarly, $\max_{j}\rho(y_j,X)\leq \rho_s(Y,X)=\rho_s(X,Y)$, thus
\[
\rho_{H}(X,Y)=\max\left\{ \max_{x \in X} \rho(x,Y), \max_{y \in F} \rho(y,X) \right\}\leq \rho_s(X,Y).
\]
\end{proof}

\begin{proposition}\label{prop:Hausdorff_symmetric}
Let $X = \{x_1, \ldots, x_n\}$ and $Y = \{y_1, \ldots, y_n\}$ be subsets of a metric space $(M,\rho)$ and set $\delta:=\min\{\rho(x_i,x_j) : i \neq j\}.$ If $\rho_{H}(X,Y) < \delta/2$, then $\rho_s(X,Y) = \rho_{H}(X,Y)$.
\end{proposition}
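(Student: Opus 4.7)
The plan is to prove the nontrivial inequality $\rho_s(X,Y) \leq \rho_H(X,Y)$ under the separation hypothesis, since Lemma \ref{lemma:Hausdorff-leq-symmetric} already supplies the opposite inequality. The strategy is to construct an explicit permutation $\sigma \in S_n$ realizing the symmetric distance, obtained by sending each $x_i$ to its nearest neighbor in $Y$, and then to use the separation of $X$ to show that this map is injective (hence bijective).

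More concretely, I would proceed as follows. First, by the definition of the Hausdorff distance, for each $i \in \{1,\ldots,n\}$ we have $\rho(x_i,Y) \leq \rho_H(X,Y) < \delta/2$, so there exists an index $\sigma(i) \in \{1,\ldots,n\}$ with
\[
\rho(x_i, y_{\sigma(i)}) = \rho(x_i, Y) \leq \rho_H(X,Y) < \delta/2 .
\]
Second, I would verify that $\sigma : \{1,\ldots,n\} \to \{1,\ldots,n\}$ is injective. Indeed, if $\sigma(i) = \sigma(i') = j$ with $i \neq i'$, then by the triangle inequality
\[
\rho(x_i, x_{i'}) \leq \rho(x_i, y_j) + \rho(y_j, x_{i'}) < \delta/2 + \delta/2 = \delta ,
\]
which contradicts the definition of $\delta$. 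Since $\sigma$ is an injection of a finite set into itself of the same cardinality, it is a permutation.

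Third, once $\sigma \in S_n$ is in hand, the definition of the symmetric distance gives
\[
\rho_s(X,Y) \leq \max_{i} \rho(x_i, y_{\sigma(i)}) = \max_{i} \rho(x_i, Y) \leq \rho_H(X,Y) ,
\]
which together with Lemma \ref{lemma:Hausdorff-leq-symmetric} yields the desired equality.

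There is no real obstacle here: the statement is essentially a pigeonhole argument dressed up in metric language, and the only subtlety is recognizing that the separation hypothesis is used exclusively to rule out two distinct points of $X$ sharing a nearest neighbor in $Y$. I would note in passing that the hypothesis involves $\delta$ depending only on $X$ and not on $Y$; no symmetric assumption on $Y$ is needed, because injectivity of $\sigma$ and the finite-to-finite cardinality argument suffice to promote $\sigma$ to a bijection.
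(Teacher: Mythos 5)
Your proof is correct and follows essentially the same route as the paper: match each $x_i$ to its nearest point of $Y$ and use the $\delta$-separation of $X$ to show that this matching is a bijection, then combine the resulting bound $\rho_s(X,Y)\leq\rho_H(X,Y)$ with Lemma \ref{lemma:Hausdorff-leq-symmetric}. Your triangle-inequality argument for injectivity is a slightly cleaner substitute for the paper's disjoint-balls/pigeonhole reasoning, but the underlying idea is the same.
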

\begin{proof}
Since $\rho_{H}(X,Y) < \delta/2$, it follows that $\max_{x \in X} \rho(x,Y)<\delta/2$ which in turn implies that for every $x\in X$, $\rho(x,Y)<\delta/2$. 
This amounts to: for every $i\in \{1,...,n\}$, there exists a $j\in \{1,...,n\}$ such that $y_j\in B(x_i;\delta/2)$. But since $X$ and $Y$ are finite sets with same cardinality, we infer that $y_j$ is the only element of $Y$ that is contained in $B(x_i;\delta/2)$. Let us rename and denote this $y_j$ by $y_i$, so that for every $i\in \{1,...,n\}$, we have a unique $y_i\in B(x_i;\delta/2)$.
Then $\rho(x_i,Y) = \rho(x_i,y_i)$. 

Similarly, for every $y_j\in Y$, there is one and only one element of $X$ that is contained in $B(y_j;\delta/2)$, and since, from the arguments in the above paragraph, there is already $x_j$ in $X$ with $\rho(y_j,x_j)<\delta/2$, it  follows that $\rho(y_j,X)=\rho(y_j,x_j)$. 
Consequently, we get 
\[
\rho_{H}(X,Y)=\max\left\{\rho(x_i,y_i): i\in \{1,...,n\} \right\}.
\] 
But this is same as $\min_{\sigma \in S_n} \max\{\rho(x_i,y_{\sigma(i)}) : i=1, \ldots, n\} $; for if we choose any permutation other than the identity permutation, which leads to measuring the distance of $x_i$ from $y_i$, it would lean to measuring the distance of one of the the elements of $X$, say $x_i$, to a point in $Y$ that is not in $B(x_i;\delta/2)$ thereby forcing the value of $\max_i\{(x_i,y_{\sigma(i)})\}$ to be strictly greater than $\delta/2$.
This proves that $\rho_{H}(X,Y)=\rho_s(X,Y)$.
\end{proof}
\begin{corollary}
If $\rho_s(X,Y) < \delta/2$, then $\rho_s(X,Y) = \rho_{H}(X,Y)$.
\end{corollary}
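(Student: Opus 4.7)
The plan is to derive this as a direct consequence of the two preceding results, Lemma \ref{lemma:Hausdorff-leq-symmetric} and Proposition \ref{prop:Hausdorff_symmetric}. By Lemma \ref{lemma:Hausdorff-leq-symmetric} we always have $\rho_H(X,Y) \le \rho_s(X,Y)$, so under the hypothesis $\rho_s(X,Y) < \delta/2$ we immediately get $\rho_H(X,Y) < \delta/2$. Then the hypothesis of Proposition \ref{prop:Hausdorff_symmetric} is satisfied, and that proposition yields $\rho_H(X,Y) = \rho_s(X,Y)$.

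So the entire proof is just chaining these two facts together, and there is no real obstacle — the corollary is essentially a restatement of Proposition \ref{prop:Hausdorff_symmetric} with the slightly weaker (but symmetric-in-flavor) hypothesis $\rho_s(X,Y) < \delta/2$ substituted for $\rho_H(X,Y) < \delta/2$, which is permissible precisely because of Lemma \ref{lemma:Hausdorff-leq-symmetric}. No further estimates on the permutation $\sigma$ or the separation $\delta$ are needed, since all of that work has already been done inside Proposition \ref{prop:Hausdorff_symmetric}.
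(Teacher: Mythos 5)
Your proof is correct and is exactly the intended argument: Lemma \ref{lemma:Hausdorff-leq-symmetric} upgrades the hypothesis $\rho_s(X,Y)<\delta/2$ to $\rho_H(X,Y)<\delta/2$, after which Proposition \ref{prop:Hausdorff_symmetric} gives the equality. The paper leaves this corollary without a written proof precisely because this two-line chaining is the whole content.
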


The following example shows that in Proposition \ref{prop:Hausdorff_symmetric}, the constant $\delta/2$ cannot be replaced by a larger constant.

\begin{example}
Let $t,\delta>0$ and assume also that $\delta < t/10 < 1/10$. 
Consider two subsets $X=\{0,\delta, t+\delta/2\}$ and $Y=\{\delta/2, t, t+\delta\}$ in $\bR$ endowed with the usual metric. 
Since all the points are on a straight line, it is easy to see that $\rho_s(X,Y)=t-\delta$, while $\rho_{H}(X,Y)=\delta/2$. 

This example can be modified to show that no constant $c$ strictly larger than $\delta/2$ can work in Proposition \ref{prop:Hausdorff_symmetric} even when we restrict attention to the pseudohyperbolic metric on the disc. 
Indeed, from Definition \ref{def:phdistance} it follows $\rho_{\rm{ph}}(z,w)/|z-w| \to 1$ uniformly as $z,w \to 0$. 
Then for sufficiently small $t$ we will have $\rho_H(X,Y) \approx \delta/2 < c$ while $\rho_s(X,Y) \approx t-\delta$. 
\end{example}

Recall that $\Aut(\mathbb{B}_d)$ denotes the group of biholomorphic automorphisms (or simply, automorphisms) of the unit ball $\bB_d$, and that two subsets of the unit ball are said to be {\em congruent} if one is the image of the other under an automorphism. 
Automorphisms preserve the pseudohyperbolic metric \cite[Theorem 8.1.4]{RudinUnitBall}, hence congruent subsets of the ball are isometric as metric spaces. 
Moreover, the RKHSs induced on two finite subsets of the ball by the Drury-Arveson space are isometrically isomorphic if and only if they are congruent \cite[Theorem 7]{Rochberg2019}. 
For this reason, we would like a measure of distance between subsets of the ball that is blind to automorphisms. 

\begin{definition}\label{def:automorphism-invariant-Hausdorff-Symmetric-Distances}
Let $\rho$ be a metric on $\mathbb{B}_d$, and let $X$ and $Y$ be two subsets of $\mathbb{B}_d$. 
The {\em automorphism invariant Hausdorff distance} between $X$ and $Y$ is defined to be
\[
\tilde{\rho}_{H}(X,Y) =\inf\left\{\rho_{H}(X, \Phi(Y)):\Phi\in\Aut(\mathbb{B}_d)\right\}. 
\]
If $X = \{x_1, \ldots, x_n\}$ and $Y = \{y_1, \ldots, y_n\}$ are finite sets of the same cardinality, then we define their {\em automorphism invariant symmetric distance} to be 
\[
\tilde{\rho}_{s}(X,Y) = \inf\left\{\rho_s(X, \Phi(Y)):\Phi\in\Aut(\mathbb{B}_d)\right\}. 
\]
Because $\Aut(\mathbb{B}_d)$ is a group this expression is symmetric in $X$ and $Y$.
\end{definition}

\begin{lemma}\label{lemma:auto-Hausdorff-leq-auto-symmetric}
Let $\rho$ be a metric on $\mathbb{B}_d$. If $X$ and $Y$ are finite subsets of $\mathbb{B}_d$ with the same cardinality,  then $$\tilde{\rho}_{H}(X,Y)\leq \tilde{\rho}_{s}(X,Y).$$
\end{lemma}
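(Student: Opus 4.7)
The plan is to reduce the statement to the already-proved inequality $\rho_H \leq \rho_s$ from Lemma \ref{lemma:Hausdorff-leq-symmetric} by a one-line argument that takes infimum over $\Aut(\mathbb{B}_d)$ on both sides.

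More precisely, fix any $\Phi \in \Aut(\mathbb{B}_d)$. Then $X$ and $\Phi(Y)$ are finite subsets of $\mathbb{B}_d$ of the same cardinality (since $\Phi$ is a bijection), so Lemma \ref{lemma:Hausdorff-leq-symmetric} applies to them and yields
\[
\rho_H(X, \Phi(Y)) \leq \rho_s(X, \Phi(Y)).
\]
Since this holds for every $\Phi \in \Aut(\mathbb{B}_d)$, taking the infimum over $\Phi$ on both sides gives
\[
\tilde{\rho}_H(X,Y) = \inf_{\Phi \in \Aut(\mathbb{B}_d)} \rho_H(X, \Phi(Y)) \leq \inf_{\Phi \in \Aut(\mathbb{B}_d)} \rho_s(X, \Phi(Y)) = \tilde{\rho}_s(X,Y),
\]
as desired. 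There is no real obstacle here — the statement is purely a consequence of the pointwise (in $\Phi$) inequality $\rho_H \leq \rho_s$ combined with the monotonicity of infimum. I would keep the proof to two or three lines.
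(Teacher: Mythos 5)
Your proof is correct and is essentially identical to the paper's: both reduce to the pointwise inequality $\rho_H(X,\Phi(Y)) \leq \rho_s(X,\Phi(Y))$ from Lemma \ref{lemma:Hausdorff-leq-symmetric} and then take the infimum over $\Phi \in \Aut(\mathbb{B}_d)$. No issues.
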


\begin{proof}
Notice that for any $\Psi\in \Aut(\mathbb{B}_d)$, we have by Lemma \ref{lemma:Hausdorff-leq-symmetric}
\[
\tilde{\rho}_{H}(X,Y)\leq \rho_{H}(X,\Psi(Y))\leq \rho_s(X, \Psi(Y)). 
\]
Taking infimum over all automorphisms in $\mathbb{B}_d$ we get the result.
\end{proof}

\begin{proposition}\label{prop:auto-Hausdorff_auto-symmetric}
Let $X = \{x_1, \ldots, x_n\}$ and $Y = \{y_1, \ldots, y_n\}$ be subsets of the metric space $(\mathbb{B}_d,\rho)$, and set $\delta:=\min\{\rho(x_i,x_j) : i \neq j\}.$ If $\tilde{\rho}_{H}(X,Y) < \delta/2$, then $\tilde{\rho}_{s}(X,Y) = \tilde{\rho}_{H}(X,Y)$.
\end{proposition}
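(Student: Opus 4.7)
The plan is to follow the template of Proposition \ref{prop:Hausdorff_symmetric}, but pulled back through an approximately optimal automorphism. The inequality $\tilde{\rho}_{H}(X,Y)\leq \tilde{\rho}_{s}(X,Y)$ is already handed to us by Lemma \ref{lemma:auto-Hausdorff-leq-auto-symmetric}, so the entire task reduces to establishing $\tilde{\rho}_{s}(X,Y)\leq \tilde{\rho}_{H}(X,Y)$ under the hypothesis $\tilde{\rho}_{H}(X,Y) < \delta/2$.

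Fix any $\epsilon > 0$ small enough that $\tilde{\rho}_{H}(X,Y) + \epsilon < \delta/2$; this is possible precisely because of the hypothesis. By the definition of $\tilde{\rho}_{H}$ as an infimum, we may select an automorphism $\Phi \in \Aut(\mathbb{B}_d)$ with
\[
\rho_{H}(X, \Phi(Y)) < \tilde{\rho}_{H}(X,Y) + \epsilon < \delta/2.
\]
The key observation is that the quantity $\delta = \min\{\rho(x_i,x_j) : i \neq j\}$ depends only on $X$, not on $Y$, so it is unaffected by replacing $Y$ with $\Phi(Y)$. Therefore the hypothesis of Proposition \ref{prop:Hausdorff_symmetric} is satisfied for the pair $(X, \Phi(Y))$, and we may conclude
\[
\rho_{s}(X, \Phi(Y)) = \rho_{H}(X, \Phi(Y)).
\]

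From the definition of $\tilde{\rho}_{s}$ as an infimum we then obtain
\[
\tilde{\rho}_{s}(X,Y) \leq \rho_{s}(X, \Phi(Y)) = \rho_{H}(X, \Phi(Y)) < \tilde{\rho}_{H}(X,Y) + \epsilon,
\]
and since $\epsilon > 0$ was arbitrary, $\tilde{\rho}_{s}(X,Y) \leq \tilde{\rho}_{H}(X,Y)$, completing the argument. There is no real obstacle here; the only thing worth being careful about is that $\delta$ is an intrinsic invariant of $X$ alone, so that Proposition \ref{prop:Hausdorff_symmetric} can be applied verbatim to $(X,\Phi(Y))$ with the same $\delta$, and that the $\epsilon$-slack in the choice of $\Phi$ can be absorbed below $\delta/2$.
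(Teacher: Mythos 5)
Your argument is correct and is essentially identical to the paper's proof: reduce to one inequality via Lemma \ref{lemma:auto-Hausdorff-leq-auto-symmetric}, pick a near-optimal automorphism, apply Proposition \ref{prop:Hausdorff_symmetric} to the pair $(X,\Phi(Y))$, and let $\epsilon \to 0$. Your explicit remark that $\delta$ is an invariant of $X$ alone is a nice touch that the paper leaves implicit.
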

\begin{proof} 
By Lemma \ref{lemma:auto-Hausdorff-leq-auto-symmetric}, it suffices to show that $\tilde{\rho}_{s}(X,Y)\leq \tilde{\rho}_{H}(X,Y)$. 
Let us write $D = \tilde{\rho}_{H}(X,Y)$, and let $\varepsilon > 0$ be small enough so that $D+\varepsilon < \delta/2$. 
By definition of $D$, we can find an automorphism $\Psi_\varepsilon\in \Aut(\mathbb{B}_d)$ such that 
\[
\rho_{H}(X,\Psi_\varepsilon(Y))< D+\varepsilon < \delta/2, 
\]
which, by Proposition \ref{prop:Hausdorff_symmetric}, yields $\rho_{s}(X,\Psi_\varepsilon(Y))=\rho_{H}(X,\Psi_\varepsilon(Y))$. 
Thus, 
\[
\tilde{\rho}_{s}(X,Y) = \inf\left\{\rho_s(X, \Phi(Y)):\Phi\in\Aut(\mathbb{B}_d)\right\} < D+\varepsilon. 
\]
Since this is true for all $\varepsilon$ as above we conclude that $\tilde{\rho}_{s}(X,Y)\leq \tilde{\rho}_{H}(X,Y)$.
\end{proof}

\begin{corollary}
Let $\rho$ be a metric on $\mathbb{B}_d.$ 
Let $X$ and $Y$ be finite subsets of $\mathbb{B}_d$ with the same cardinality and let $\delta=\min\{\rho(x_i,x_j) : i \neq j\}$. If $\tilde{\rho}_{s}(X,Y) < \delta/2$, then $\tilde{\rho}_{s}(X,Y) = \tilde{\rho}_{H}(X,Y)$.
\end{corollary}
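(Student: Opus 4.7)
The plan is to deduce this corollary directly from Proposition \ref{prop:auto-Hausdorff_auto-symmetric} together with Lemma \ref{lemma:auto-Hausdorff-leq-auto-symmetric}, without any new work. I do not expect any obstacle: the hypothesis of the corollary is formally stronger than that of the proposition under the inequality provided by the lemma.

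The key observation is that Lemma \ref{lemma:auto-Hausdorff-leq-auto-symmetric} gives $\tilde{\rho}_{H}(X,Y) \leq \tilde{\rho}_{s}(X,Y)$. So if we assume $\tilde{\rho}_{s}(X,Y) < \delta/2$, then automatically $\tilde{\rho}_{H}(X,Y) < \delta/2$ as well, which puts us exactly in the situation covered by Proposition \ref{prop:auto-Hausdorff_auto-symmetric}.

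Invoking that proposition yields the desired equality $\tilde{\rho}_{s}(X,Y) = \tilde{\rho}_{H}(X,Y)$, completing the proof. So the entire argument is: chain the lemma's inequality with the hypothesis to verify the proposition's hypothesis, then quote the proposition. No further estimates, and in particular no revisiting of the minimization over $\mathrm{Aut}(\mathbb{B}_d)$ or of the arguments from Proposition \ref{prop:Hausdorff_symmetric}, are needed.
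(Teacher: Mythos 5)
Your argument is correct and is exactly the intended one: Lemma \ref{lemma:auto-Hausdorff-leq-auto-symmetric} upgrades the hypothesis $\tilde{\rho}_{s}(X,Y)<\delta/2$ to $\tilde{\rho}_{H}(X,Y)<\delta/2$, after which Proposition \ref{prop:auto-Hausdorff_auto-symmetric} gives the equality. The paper states the corollary without proof precisely because this one-line deduction is all that is needed.
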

Thus, once sets are close enough, $\tilde{\rho}_{s}$ and $\tilde{\rho}_{H}$ are the same. 
We choose to continue working $\tilde{\rho}_{s}$ because it is more convenient, keeping in mind the equivalence.

We close this section with an additional suggestion for measuring distances between subsets of the ball which might be relevant to our endeavours. 
A function $F = (F_1, \ldots, F_m) \in \bB_d \to \bC^m$ will be called a {\em vector valued multiplier} if its component functions are all multipliers, i.e., if $F_1, \ldots, F_m \in \cM_d$. 
We then define the {\em multipier norm} of a vector valued multiplier to be $\|F\| = \|M_F\|$, where we think of $M_F$ as the ``row multiplication operator" $M_F = (M_{F_1}, \ldots, M_{F_m}) : H^2_d \oplus \cdots \oplus H^2_d \to H^2_d$ that acts as
\[
M_F (h_1 \oplus \cdots \oplus h_m) = \sum_{i=1}^m F_i h_i . 
\]
If $X, Y$ are both finite subsets of the unit ball $\bB_d \subseteq \bC^d$ with $n$ points each, one can always find vector valued multipliers  (in fact, vector valued polynomials) $F,G : \bB_d \to \bC^d$ such that $F\big|_X$ is a bijection onto $Y$ and $G\big|_Y$ is its inverse.
This allows us to define the following measure of difference. 

\begin{definition}\label{def:dm}
Let $X = \{x_1, \ldots, x_n\}$ and $Y = \{y_1, \ldots, y_n\}$ be two subsets of the unit ball $\bB_d$. 
The {\em multiplier discrepancy} between $X$ and $Y$ is defined to be
\[
\delta_{mult}(X,Y) = \inf\left\{\max\{\|G\|, \|F\|\} : F,G \textrm{ are multipliers }, G\big|_Y =  \left(F\big|_X \right)^{-1}\right\}. 
\]
\end{definition}

One can show that $\delta_{mult}(X,Y) \geq 1$ whenever $n>1$ (if $X$ and $Y$ are singleton sets they can be mapped from one to the other using a constant function which may even be zero). 
If $X$ and $Y$ are congruent then $\delta_{mult}(X,Y) = 1$, and a weak compactness argument, together with the methods of \cite[Section 4]{Davidson-Ramsey-Shalit-2015} shows the converse. 

\begin{remark}
One might think that the following could be a good way to measure distance between sets in our context:
\[
\tilde{\delta}_{mult}(X,Y) \stackrel{?}{=} \inf\left\{\|G\| \|F\| : F,G \textrm{ are multipliers }, G\big|_Y =  \left(F\big|_X \right)^{-1}\right\}. 
\]
However, the above definition is not promising. For example, consider $X = \{0,r\}$. Then $\tilde{\delta}_{mult}(X,cX) = 1$ for every $c>0$, while $\tilde{\rho}_H(X,{cX})$ grows with diminishing $c$.
\end{remark}

\section{Distances for reproducing kernel Hilbert spaces and multiplier algebras}\label{sec:distance_spaces}

In this section we define notions of distance on the sets of all RKHSs and all multiplier algebras, and we describe a quantitative relation between the two notions in the case of complete Pick spaces. 

\begin{definition}\label{def:RKBMdist}
For $i=1,2$, let $\cH_i$ be a reproducing kernel Hilbert space on a set $X_i$ with kernel $K_i$. 
Define
\[
\delta_{RK}(\cH_1,\cH_2) = \inf\left\{\|T\| \|T^{-1}\| : T : \cH_1 \to \cH_2 \textrm{ is an RKHS isomorphism}\right\}.
\]
The {\em reproducing kernel Banach-Mazur distance} between these spaces is defined to be 
\[
\rho_{RK}(\cH_1,\cH_2) = \log \left( \delta_{RK}(\cH_1,\cH_2) \right). 
\]
\end{definition}
The reproducing kernel Banach-Mazur distance is always defined when $\cH_1,\cH_2$ are finite dimensional and of the same dimension, and in this case one can show that $\rho_{RK}(\cH_1,\cH_2) = 0$ if and only if $\cH_1$ and $\cH_2$ are isometrically isomorphic as RKHSs. 
If there is no RKHS isomorphism $T : \cH_1 \to \cH_2$ then one can say that $\rho_{RH}(\cH_1,\cH_2) = \infty$, but we will not deal with this case in this paper. 

If $\varphi : \cM_1 \to \cM_2$ is a homomorphism between multiplier algebras, then we let $\|\varphi\|_{cb}$ denote the completely bounded norm of $\varphi$. 
This makes sense, because multiplier algebras are operator algebras (see Section \ref{subsec:isomult}). 
\begin{definition}\label{def:BMmultdist}
Given two RKHSs $\cH_i$ ($i=1,2$) 
with multiplier algebras $\cM_i = \Mult(\cH_i)$ we define 
\[
\delta_{M}(\cM_1,\cM_2) = \inf\left\{\|\varphi\|_{cb} \|\varphi^{-1}\|_{cb} :\varphi : \cM_1 \to \cM_2 \textrm{ a multiplier algebra isomorphism}\right\}. 
\]
The {\em multiplier Banach-Mazur distance} between $\cM_1$ and $\cM_2$ is defined to be
\[
\rho_{M}(\cM_1,\cM_2) = \log \left(\delta_{M}(\cM_1,\cM_2) \right). 
\]
\end{definition}
Again, this distance is always defined when $\cH_1$ and $\cH_2$ are of the same finite dimension. 
In the case of finite dimensional quotients of $H^2_d$ that we are interested in, every isomorphism is a multiplier algebra isomorphism (see Section \ref{subsec:usefulfacts}), so we have in this case
\[
\delta_{M}(\cM_1,\cM_2) = \inf\left\{\|\varphi\|_{cb} \|\varphi^{-1}\|_{cb} :\varphi : \cM_1 \to \cM_2 \textrm{ is an algebra isomorphism}\right\}. 
\]

\begin{remark}
Some readers might wonder why we work with the completely bounded norm of the map $\varphi$ and $\varphi^{-1}$ rather than the operator norm of these maps, and whether we can obtain similar results if we work with the operator norm instead. 
The completely bounded norm is a natural choice for operator algebras, as we briefly discussed in Section \ref{subsec:isomult}; for a reader not familiar with this idea, we recommend \cite{Pau02}. 
We do not know whether our results hold with the norm instead of the completely bounded norm. 
\end{remark}

The goal of this section is to study the connection between $\rho_{RK}$ and $\rho_M$. 
We start with the easy direction, which holds true for arbitrary RKHSs. 
\begin{proposition}\label{prop:BMspace_mult}
For every pair of RKHSs $\cH_1$ and $\cH_2$, it holds that 
\[
\rho_{M}(\cM_1,\cM_2) \leq \rho_{RK}(\cH_1,\cH_2)^2 .
\]
\end{proposition}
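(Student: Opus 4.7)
The plan is to exploit the natural construction of a multiplier algebra isomorphism from an RKHS isomorphism. Let $T:\cH_1\to\cH_2$ be an RKHS isomorphism determined by a bijection $F:X_1\to X_2$ and a nowhere-vanishing function $x\mapsto\lambda_x$ via $Tk^1_x=\lambda_x k^2_{F(x)}$. I would define the candidate map $\varphi:\cM_1\to\cM_2$ by $\varphi(f)=f\circ F^{-1}$, which is manifestly a unital algebra homomorphism implemented by composition with a bijection as required in Subsection \ref{subsec:isomult}. Two tasks remain: confirm that $\varphi(f)$ is actually a multiplier on $\cH_2$ (so that $\varphi$ lands in $\cM_2$), and bound $\|\varphi\|_{cb}$ and $\|\varphi^{-1}\|_{cb}$ in terms of $\|T\|\,\|T^{-1}\|$.

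The key step is an operator intertwining calculation on reproducing kernels. For any $f\in\cM_1$, I would compute the action of $TM_f^*T^{-1}$ on a kernel $k^2_{F(x)}\in\cH_2$: using $T^{-1}k^2_{F(x)}=\lambda_x^{-1}k^1_x$, the eigenvalue relation $M_f^*k^1_x=\overline{f(x)}\,k^1_x$, and the defining action of $T$, one obtains
\[
TM_f^*T^{-1}k^2_{F(x)}=\overline{f(x)}\,k^2_{F(x)}=\overline{\varphi(f)(F(x))}\,k^2_{F(x)}.
\]
Since the reproducing kernels span $\cH_2$, this forces $TM_f^*T^{-1}$ to be the adjoint of multiplication by $\varphi(f)$. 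In particular, $\varphi(f)$ is a bona fide multiplier, and one has the operator identity
\[
M_{\varphi(f)}=(T^*)^{-1}M_f\,T^*.
\]
This simultaneously verifies that $\varphi$ is a multiplier algebra isomorphism in the sense of Subsection \ref{subsec:isomult}.

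With the identity in hand, the norm bounds are routine. Amplifying the identity to $n\times n$ matrices via the direct-sum operator $T_n=T\oplus\cdots\oplus T$ (which satisfies $\|T_n\|=\|T\|$ and $\|T_n^{-1}\|=\|T^{-1}\|$) yields
\[
M_{[\varphi(f_{ij})]}=(T_n^*)^{-1}M_{[f_{ij}]}T_n^*,
\]
so that $\|\varphi^{(n)}(F)\|\leq \|T\|\,\|T^{-1}\|\,\|F\|$ for every matrix $F$ over $\cM_1$ and every $n$. Hence $\|\varphi\|_{cb}\leq\|T\|\,\|T^{-1}\|$, and applying the same reasoning to $T^{-1}$ gives $\|\varphi^{-1}\|_{cb}\leq\|T\|\,\|T^{-1}\|$. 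Multiplying and taking infimum over RKHS isomorphisms $T$ produces $\delta_M(\cM_1,\cM_2)\leq \delta_{RK}(\cH_1,\cH_2)^2$; passing to logarithms yields the stated comparison between $\rho_M$ and $\rho_{RK}$.

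The main obstacle is the operator identity in the second paragraph, since the only way to certify that $(T^*)^{-1}M_fT^*$ is a multiplication operator at all is through its action on the spanning set of reproducing kernels, and the computation must be kept straight with regard to adjoints, conjugates, and the direction of the bijection $F$. Once that identification is secured, the matrix amplification and the extraction of cb-norm bounds are essentially bookkeeping.
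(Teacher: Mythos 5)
Your proposal is correct and follows essentially the same route as the paper: conjugation by $T^*$ sends $M_f$ to $M_{f\circ F^{-1}}$, verified via the intertwining relation $TM_f^*T^{-1}=M^*_{f\circ F^{-1}}$ on the reproducing kernels, and the cb-norm bound $\|\varphi\|_{cb}\leq\|T\|\,\|T^{-1}\|$ then gives $\delta_M\leq\delta_{RK}^2$. The only difference is that you spell out the matrix amplification explicitly, which the paper leaves as an easy verification.
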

%
\begin{proof}
Let $T:\cH_1\to \cH_2$ be an RKHS isomorphism given by \eqref{eq:Tiso}, that is, $T (k^1_x) = \lambda_x k^2_{F(x)}$ for every $x \in X_1$. 
Given $f\in \cM_1$, we have the multiplication operator $M_f$ on $\cH_1$. 
Consider the bounded operator $(T^*)^{-1}M_fT^*:\cH_2\to \cH_2$. 
We claim that it is a multiplication operator on $\cH_2$, i.e., there exists a function $h\in \cM_2$ such that $(T^*)^{-1}M_fT^*=M_h$. 
To this end, observe that for every $x\in X_1$, we have 
\[
TM_f^*(k_x^1)
=\lambda_x\overline{f(x)}k^2_{F(x)}
=\lambda_xM^*_{f\circ F^{-1}}(k^2_{F(x)})=M^*_{f\circ F^{-1}}(\lambda_xk^2_{F(x)})
=M^*_{f\circ F^{-1}}T(k^1_x),
\]  
which yields $TM_f^*T^{-1}=M^*_{f\circ F^{-1}}$, and our claim is proved.
Thus $h=f\circ F^{-1}$ is a multiplier, which leads us to define $\varphi:\cM_1 \to \cM_2$ via $$\varphi(f)=f\circ F^{-1}.$$ It is then easy to see that $\|\varphi\|_{cb} \leq \|T\| \|T^{-1}\|$. The conclusion $\rho_M(\cM_1,\cM_2) \leq \rho_{RK}(\cH_1,\cH_2)^2$ follows readily, since $\varphi^{-1}$ is also given by composition and is implemented by conjugation with $T$. 
\end{proof}

The following two lemmas will be used to prove a converse to Proposition \ref{prop:BMspace_mult}. 
\begin{lemma}\label{lem:Schursum1}
Let $v_1, \ldots, v_n$ be distinct elements in the unit ball $\bB_d \subseteq \bC^d$. 
For all $\varepsilon > 0$ and $r\in (0,1)$, there exists $N \in \bN$ such that the following matrix inequality holds: 
\[
\varepsilon\left[\sum_{k=0}^N \langle v_i, v_j \rangle^k \right] \geq  \left[\sum_{k=N+1}^\infty \langle u_i, u_j \rangle^k\right]
\]
for every $n$-tuple $u_1, \ldots, u_n \in r\bB_d$. 
\end{lemma}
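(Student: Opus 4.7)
My plan is to prove the inequality by controlling the two sides separately as $n \times n$ positive semidefinite matrices: I will show that the right-hand side matrix has operator norm tending to $0$ uniformly in $u$ as $N \to \infty$, while the left-hand side matrix is bounded below by a fixed positive multiple of the identity for all large $N$. Throughout, I will interpret $\geq$ as the partial order on self-adjoint matrices and use that the Schur (entrywise) powers of the Gram matrix $[\langle u_i,u_j\rangle]$ are positive semidefinite, so every partial sum of the series is positive semidefinite.

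\medskip

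\textbf{Step 1: Uniform tail bound on the right-hand side.} Write $A_N(u) := \left[\sum_{k=N+1}^\infty \langle u_i,u_j\rangle^k\right]$ and $B_k(u) := [\langle u_i,u_j\rangle^k]$. Each $B_k(u)$ is positive semidefinite as the $k$-th Schur power of a Gram matrix, so its operator norm is bounded by its trace $\sum_i \|u_i\|^{2k} \leq n r^{2k}$ whenever $u_1,\ldots,u_n \in r\bB_d$. Summing the geometric series,
\[
\|A_N(u)\| \;\leq\; \sum_{k=N+1}^\infty n r^{2k} \;=\; \frac{n\, r^{2(N+1)}}{1-r^2} \;\xrightarrow[N\to\infty]{}\; 0,
\]
and this bound is uniform over $(u_1,\ldots,u_n) \in (r\bB_d)^n$.

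\medskip

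\textbf{Step 2: Lower bound on the left-hand side.} Let $K_N(v) := \left[\sum_{k=0}^N \langle v_i,v_j\rangle^k\right]$ and $K_\infty(v) := \left[\frac{1}{1-\langle v_i,v_j\rangle}\right]$, the Drury--Arveson kernel matrix at the fixed points $v_1,\ldots,v_n$. Because the $v_i$ are distinct, $K_\infty(v)$ is positive definite (the reproducing kernels $k_{v_i}$ in $H^2_d$ are linearly independent), so it has a smallest eigenvalue $c_0 > 0$. Since $K_N(v) \to K_\infty(v)$ entrywise, and these are all fixed $n \times n$ matrices, the convergence holds in operator norm. Thus there exists $N_0$ such that for all $N \geq N_0$,
\[
K_N(v) \;\geq\; \tfrac{c_0}{2}\, I.
\]

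\medskip

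\textbf{Step 3: Combining.} Choose $N \geq N_0$ large enough that $\frac{n r^{2(N+1)}}{1-r^2} \leq \varepsilon \cdot \frac{c_0}{2}$. Then for every $n$-tuple $u_1,\ldots,u_n \in r\bB_d$,
\[
A_N(u) \;\leq\; \|A_N(u)\|\, I \;\leq\; \varepsilon \cdot \tfrac{c_0}{2}\, I \;\leq\; \varepsilon\, K_N(v),
\]
which is exactly the claimed matrix inequality.

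\medskip

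I do not expect a serious obstacle here. The only point that requires actual input (rather than routine estimation) is the positive definiteness of the Drury--Arveson kernel matrix at distinct points in Step~2; the rest is a uniform geometric-series tail bound against a matrix whose norm is eventually bounded away from zero.
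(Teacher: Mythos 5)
Your proof is correct and follows essentially the same strategy as the paper's: a uniform geometric tail bound $\frac{n r^{2(N+1)}}{1-r^2}I$ on the right-hand side, combined with an eventual lower bound $cI$ on the partial sums of the Szeg\H{o} kernel matrix at the distinct points $v_i$. The only (minor) difference is in how that lower bound is obtained --- you use operator-norm convergence of the partial sums to the positive definite limit, whereas the paper argues via the stabilization of the images of the partial sums together with their monotonicity; both are valid, and yours is arguably cleaner.
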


\begin{proof}
Fix $\varepsilon>0$ and let $A=\left[\langle v_i, v_j \rangle \right]_{i,j=1}^n$ be the  $n\times n$ Grammian matrix of vectors $\{v_1,...,v_n\}$. 
Similarly let $u_1, \ldots, u_n \in r\bB_d$ and put $B=\left[\langle u_i, u_j \rangle \right]_{i,j=1}^n$. 

If $T$ is a linear operator on $\bC^n$, then we let $\operatorname{Im}(T) = \{Tv : v \in \bC^n\}$ denote the image of $T$.
For an arbitrarily chosen natural number $M$ we define
\[
\cG_M:=\operatorname{Im}\left(\sum_{k=0}^M A^{\circ k} \right), 
\]
where $A^{\circ k}$ is the Schur product of $A$ with itself $k$ times.
These form an increasing sequence $\cG_1\subseteq \cG_2 \subseteq ... \subseteq \mathbb{C}^n$ of subspaces of $\mathbb{C}^n$, and so they stabilize after a certain point. Thus, there exists a natural number $M_0$ such that 
\[
\cG_1\subseteq \cG_2 \subseteq ... \subseteq \cG_{M_0} = \cG_{M_0+1}= \cG_{M_0+2}=... \subseteq \mathbb{C}^n,
\]
or, in other words,
\[
\cG_M = \cG_{M_0} \subseteq \mathbb{C}^n \,\,\text{ for all }\,\, M \geq M_0.
\]
Let $K$ be the Szego kernel in the ball with kernel function $k_w(z) = (1 - \langle z, w \rangle)^{-1}$. 
Since the kernel functions $k_{v_1}, \ldots, k_{v_n}$ are linearly independent, the matrix 
\[
\left[K(v_i, v_j)\right] = \left[\frac{1}{1- \langle v_i, v_j \rangle}\right] = \sum_{k=0}^\infty A^{\circ k} 
\]
is invertible, so we have that $\cG_{M} = \mathbb{C}^n$ for all $M \geq M_0$. 

From $\max\{\|u_i\|^2 : i=1, \ldots, n\} \leq r^2 < 1$ we have the elementary estimate $\|B^{\circ k}\| \leq nr^{2k}$ for the norm of $B^{\circ k}$,  which yields the operator inequality $B^{\circ k} \leq n r^{2k} \cdot I$ for all $k$. 
Consequently,  
\[
\sum_{k=M+1}^\infty B^{\circ k} \leq \frac{nr^{2(M+1)}}{1-r^2} I \,\,\text{ for all }\,\, M \in \bN.
\]
On the other hand, since $\sum_{k=0}^{M_0} A^{\circ k}$ is strictly positive definite, there exists $c>0$ such that 
\[
\sum_{k=0}^{M} A^{\circ k} \geq c I \,\,\text{ for all }\,\,M \geq M_0.
\]
Now, we choose a natural number $N \geq M_0$ which satisfies 
\[
\frac{nr^{2(N+1)}}{1-r^2} \leq c \varepsilon.
\] 
This yields
\[
\varepsilon\sum_{k=0}^{N} A^{\circ k} \geq\sum_{k=N+1}^\infty B^{\circ k},
\]
 which is the desired inequality.  
\end{proof}

\begin{lemma}\label{lem:Schursum}
Let $v_1, \ldots, v_n$ be distinct elements in the unit ball $\bB_d \subseteq \bC^d$. 
For all $\varepsilon > 0$, there exists $N \in \bN$ such that the following matrix inequality holds: 
\[
\left[\sum_{k=0}^N \langle v_i, v_j \rangle^k \right] \geq \frac{1}{1+\varepsilon}  \left[\sum_{k=0}^\infty \langle v_i, v_j \rangle^k\right] = \frac{1}{1+\varepsilon}  \left[\frac{1}{1- \langle v_i, v_j \rangle}\right] .
\]
\end{lemma}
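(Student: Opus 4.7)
The plan is to derive Lemma \ref{lem:Schursum} as an immediate corollary of Lemma \ref{lem:Schursum1}. Since the points $v_1, \ldots, v_n$ all lie strictly inside the open ball $\bB_d$, I fix any radius $r \in (0,1)$ with $\max_i \|v_i\| < r$, so that $v_1, \ldots, v_n \in r\bB_d$. With this $r$ and the given $\varepsilon > 0$, Lemma \ref{lem:Schursum1} provides an $N \in \bN$ such that, for \emph{every} $n$-tuple of points in $r\bB_d$, the tail inequality
\[
\varepsilon\left[\sum_{k=0}^N \langle v_i, v_j \rangle^k\right] \geq \left[\sum_{k=N+1}^\infty \langle u_i, u_j \rangle^k\right]
\]
holds. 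I then specialize this conclusion by taking $u_i = v_i$ for each $i$, which is permitted because $v_i \in r\bB_d$.

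After this specialization, adding the matrix $\left[\sum_{k=0}^N \langle v_i, v_j \rangle^k\right]$ to both sides gives
\[
(1+\varepsilon)\left[\sum_{k=0}^N \langle v_i, v_j \rangle^k\right] \geq \left[\sum_{k=0}^\infty \langle v_i, v_j \rangle^k\right] = \left[\frac{1}{1 - \langle v_i, v_j \rangle}\right],
\]
and dividing by $1+\varepsilon$ yields precisely the asserted inequality. The identification of the geometric series on the right with the Szeg\H{o}-type kernel is automatic from $\|v_i\|\|v_j\| < 1$, which guarantees absolute convergence.

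There is no real obstacle here, since the analytic content — the stabilization of the increasing chain of image subspaces $\cG_M$, the operator inequality $B^{\circ k} \leq nr^{2k} I$ coming from the Schur product bound, and the choice of $N$ making the tail dominated by $c\varepsilon$ — has already been carried out inside the proof of Lemma \ref{lem:Schursum1}. The only \emph{new} input is the trivial observation that the given finite set $\{v_1,\ldots,v_n\}$ automatically lies in some $r\bB_d$ with $r<1$, so the hypothesis of Lemma \ref{lem:Schursum1} is satisfied with $u_i := v_i$.
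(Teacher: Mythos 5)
Your proposal is correct and follows essentially the same route as the paper: both reduce the claim to Lemma \ref{lem:Schursum1} applied with $u_i = v_i$, the only difference being that you rearrange the tail inequality by adding $\bigl[\sum_{k=0}^N \langle v_i,v_j\rangle^k\bigr]$ to both sides while the paper performs the equivalent subtraction in the other direction. Your explicit remark that one must first choose $r\in(0,1)$ with $\max_i\|v_i\|<r$ so that the hypothesis of Lemma \ref{lem:Schursum1} applies is a small but welcome point of care that the paper leaves implicit.
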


\begin{proof}
Fix $\varepsilon>0$ and let $A=\left[\langle v_i, v_j \rangle \right]_{i,j=1}^n$ be the  $n\times n$ Grammian matrix as above. 
Then we need to find an $N\in \mathbb{N}$ such that the following inequality holds:
\[
\sum_{k=0}^N A^{\circ k} \geq \frac{1}{1+\varepsilon} \sum_{k=0}^\infty A^{\circ k}.
\]
Subtracting $\frac{1}{1+\varepsilon} \sum_{k=0}^N A^{\circ k}$ from both sides, we seek
an $N$ such that the following equivalent inequality holds:
\[
\varepsilon \sum_{k=0}^N A^{\circ k} \geq \sum_{k=N+1}^\infty A^{\circ k}. 
\]
Invoking Lemma \ref{lem:Schursum1} for $u_i = v_i$ ($i=1,\ldots,n$), the proof is complete. 
\end{proof}

%

Now we are ready to prove that closeness of the multiplier algebras implies closeness of the function spaces. 

\begin{proposition}\label{prop:BMspace_mult_converse}
Let $X_1$ be a finite subset of $\bB_d$, and let $\cH_1 = \cH_{X_1}$ and $\cM_1 = \mlt\cH_1$.  
Put 
\[
r := \max\{\rho_{\rm ph}(x,y) : x,y \in X_1\}
\]
and fix some $R \in (0,1)$. 
For every $\varepsilon > 0$ there exists $N_0 \in \bN$, which depends only on $\varepsilon$, on $R$ and on $X_1$, such that the following holds: for every $X_2 \subseteq \bB_d$, if $\delta_{M}(\cM_1,\cM_2) < r^{-1}R$, then 
\[
\delta_{RK}(\cH_1,\cH_2) \leq (1+\varepsilon) \delta_{M}(\cM_1,\cM_2)^{N_0},
\]
where $\cH_2 = \cH_{X_2}$ and $\cM_2 = \mlt\cH_2$.
\end{proposition}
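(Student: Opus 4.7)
The plan is to construct an RKHS isomorphism $T \colon \cH_1 \to \cH_2$ from the bijection $F \colon X_2 \to X_1$ induced by a near-optimal multiplier algebra isomorphism $\varphi \colon \cM_1 \to \cM_2$, and to control $\|T\|\|T^{-1}\|$ by a hierarchy of matrix inequalities coming from the cb norm of $\varphi$ at each homogeneous order, combined with the kernel truncation estimates established in Lemmas \ref{lem:Schursum1} and \ref{lem:Schursum}. The idea is that $\varphi$ sends the coordinate row contraction on $\cH_1$ to the row operator $M_F = (M_{F_1}, \ldots, M_{F_d})$ on $\cH_2$, where $\varphi(z_i) = F_i$ and $F_i(y_j) = (x_j)_i$ for $F(y_j) = x_j$; this gives $\|M_F\| \leq \|\varphi\|_{cb}$ and hence $(M_F M_F^*)^k \leq \|\varphi\|_{cb}^{2k} I_{\cH_2}$ for every $k$.

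I begin by fixing $\varepsilon > 0$ and applying Lemma \ref{lem:Schursum1} with $v_i = x_i$, $r = R$, and tolerance $\varepsilon$ (and simultaneously Lemma \ref{lem:Schursum}, enlarging $N_0$ if necessary), to obtain $N_0 = N_0(\varepsilon, R, X_1)$ satisfying both
\[
\sum_{k > N_0}[\langle u_i, u_j\rangle^k] \leq \varepsilon \,A^{(N_0)} \qand A^{(N_0)} \geq (1+\varepsilon)^{-1} A
\]
for all $u_1,\ldots,u_n \in R\bB_d$, where $A = [K_1(x_i, x_j)]$ and $A^{(N)} = \sum_{k=0}^N P^{\circ k}$ with $P = [\langle x_i, x_j\rangle]$. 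Given $X_2$ with $\delta_M(\cM_1,\cM_2) < r^{-1}R$, I pick $\varphi$ whose cb-norm product is arbitrarily close to $\delta_M$ and observe that, after composing with a ball automorphism (which preserves $\delta_M$ and does not alter $\cH_2$ or $\cM_2$ up to completely isometric RKHS/algebra isomorphism) to place $0\in X_2$, a bi-Lipschitz estimate on the pseudohyperbolic metric induced by the multiplier algebra isomorphism forces the pseudohyperbolic diameter of $X_2$ to be at most $\delta_M\cdot r < R$; hence $X_2 \subseteq R\bB_d$, which is exactly what is needed to apply the uniform estimate of Lemma \ref{lem:Schursum1} with $u_i = y_i$.

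Next, computing matrix elements of $(M_F M_F^*)^k$ in the basis $\{k^2_{y_i}\}$ (using $M_{F_i}^* k^2_{y_j} = \overline{(x_j)_i}\, k^2_{y_j}$ and commutativity of the $M_{F_i}$) yields the crucial inequality
\[
(I_k)\colon\qquad P^{\circ k}\circ B \;\leq\; \|\varphi\|_{cb}^{2k}\,B,
\]
with $B = [K_2(y_i,y_j)]$, and symmetrically $Q^{\circ k} \circ A \leq \|\varphi^{-1}\|_{cb}^{2k} A$ with $Q = [\langle y_i, y_j\rangle]$. Summing $(I_k)$ for $k = 0, \ldots, N_0$ gives $A^{(N_0)} \circ B \leq C_{N_0}(\|\varphi\|_{cb})\,B$, and the tail estimate from Lemma \ref{lem:Schursum1} combined with the analogous summation for $\varphi^{-1}$ controls $B^{(N_0)}$ in terms of $A$. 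I then define $T \colon \cH_1 \to \cH_2$ by $T k^1_{x_i} = k^2_{y_i}$, so that $\|T\|^2\|T^{-1}\|^2 = \kappa(A^{-1/2} B A^{-1/2})$, and combine the bounds on $B = B^{(N_0)} + \sum_{k>N_0} Q^{\circ k}$ with the symmetric bounds on $A$ to obtain $B \leq (1+\varepsilon)\|\varphi\|_{cb}^{2N_0}\,A$ and $A \leq (1+\varepsilon)\|\varphi^{-1}\|_{cb}^{2N_0}\,B$. Multiplying these inequalities yields $\|T\|\|T^{-1}\| \leq (1+\varepsilon)(\|\varphi\|_{cb}\|\varphi^{-1}\|_{cb})^{N_0}$, and taking infimum over admissible $\varphi$ gives the conclusion.

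The main obstacle is the passage from the Schur product inequalities $(I_k)$ (which express the cb-norm information matrix-theoretically) to genuine matrix inequalities between the kernel matrices $A$ and $B$, since Schur positivity does not transfer directly to matrix positivity. My intended workaround is to exploit the specific structure of the Drury--Arveson kernel, writing $A = \sum P^{\circ k}$ entry-wise and feeding the summed $(I_k)$ into the reproducing identity for the kernel, so that the truncation $B^{(N_0)}$ can be handled by the Schur product bounds and the tail $\sum_{k>N_0} Q^{\circ k}$ is absorbed into $\varepsilon A$ via Lemma \ref{lem:Schursum1}. Arranging the bookkeeping so that the $(1+\varepsilon)$ error appears linearly while the $\delta_M^{N_0}$ factor is sharp is the most delicate part of the argument.
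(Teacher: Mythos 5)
Your overall architecture matches the paper's: you build $T$ from the bijection induced by a near-optimal $\varphi$, you correctly extract the vector-valued multiplier $F=\varphi(z)$ with $\|M_F\|\le\|\varphi\|_{cb}$, you use the normalization $0\in X_2$, $F(0)=0$ together with a Schwarz-type bound to place $X_2$ inside $R\bB_d$, and you invoke Lemmas \ref{lem:Schursum1} and \ref{lem:Schursum} to truncate the kernels at an $N_0$ depending only on $\varepsilon$, $R$ and $X_1$. However, the difficulty you flag at the end is not a bookkeeping issue but the actual mathematical content of the proposition, and your proposed workaround does not close it. Your inequality $(I_1)$, namely $P\circ B\le C^2 B$, is exactly the statement that $K_2\bigl(1-(F/C)^*(F/C)\bigr)\ge 0$; the higher $(I_k)$ follow from it by Schur-multiplying repeatedly by the positive matrix $P$, so they carry no new information, and summing them only yields the Schur-product domination $A^{(N_0)}\circ B\le\bigl(\sum_{k=0}^{N_0}C^{2k}\bigr)B$. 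Schur-product domination by $B$ does not imply ordinary domination by $B$ (take $B=I$: the inequality then constrains only the diagonal of $A^{(N_0)}$), so there is no way to ``divide by $B$'' and extract the needed comparison $A\le \mu B$; moreover the constant $\sum_{k\le N_0}C^{2k}$ is off from the sharp $C^{2N_0}$ by a factor of about $N_0$ even in the best case.

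The missing idea is the argument (adapted from Hartz) that upgrades $(I_1)$ to the genuine matrix inequality $K_2\ge K_{F/C}$, where $K_{F/C}(y_i,y_j)=(1-C^{-2}\langle F(y_i),F(y_j)\rangle)^{-1}$, i.e.\ $[K_{F/C}]=\sum_k C^{-2k}P^{\circ k}$. Since $F(0)=0$, the kernel $K_2/K_{F/C}=K_2\bigl(1-(F/C)^*(F/C)\bigr)$ is normalized at $0$; its Schur complement with respect to the row and column of $0$ is $K_2/K_{F/C}-1\ge 0$; and Schur-multiplying by $K_{F/C}$ (which is the entrywise reciprocal of $J-C^{-2}P$, $J$ the all-ones matrix) gives $K_2-K_{F/C}=K_{F/C}\circ(K_2/K_{F/C}-1)\ge 0$ by the Schur product theorem. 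This is the step that converts multiplier contractivity into a bona fide ordering of Gram matrices. Once you have $B\ge\sum_k C^{-2k}P^{\circ k}\ge C^{-2N_0}\sum_{k=0}^{N_0}P^{\circ k}\ge\frac{C^{-2N_0}}{1+\varepsilon}A$ via Lemma \ref{lem:Schursum}, and the symmetric inequality for $F^{-1}$ via Lemma \ref{lem:Schursum1} (this is where $X_2\subseteq R\bB_d$ enters, to make the choice of truncation index uniform in $X_2$), your final bookkeeping goes through exactly as you describe. Note that the normalization at the origin is therefore needed twice: not only to control the diameter of $X_2$, but to make the Schur-complement step available.
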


\begin{proof}
Let us assume that $G : X_2 \to X_1$ is a bijection that induces via composition an isomorphism $\varphi : \cM_1 \to \cM_2$ such that 
\[
\|\varphi\|_{cb} \|\varphi^{-1}\|_{cb} < r^{-1}R. 
\]
Since the norm of an isomorphism is at least $1$, we can consider constants $C_1, C_2$ such that 
\be\label{eq:C1C2}
\|\varphi\|_{cb} \leq C_1, \|\varphi^{-1}\|_{cb}\leq C_2,\,\, \textrm{ and } \,\, C_1, C_2 <   r^{-1}R. 
\ee
We will find natural numbers $M,N$ (which do not depend on $G$) such that the map $T : \cH_2 \to \cH_1$ given by $T: k^2_x \mapsto k^1_{G(x)}$ satisfies $\|T\|\|T^{-1}\| \leq (1+\varepsilon)C_1^{N}C_2^M$. 
This will give the asserted estimate with $N_0 = \max\{M,N\}$. 

We can assume, without any loss of generality, that $0\in X_1 \cap X_2$ and that
$G(0)=0$ --- if this is not the case, then we apply an automorphism of the ball to the points, which induces a reproducing kernel Hilbert space isomorphism between the function spaces (see \cite[Section 4]{Davidson-Ramsey-Shalit-2015} for details).
Note that, under this assumption, we have 
\[
\max\{\|x\| : x \in X_1\} = r, 
\] 
since $\rho_{\rm ph}(x,0) = \|x\|$ and the pseudohyperbolic metric is invariant under automorphisms of the unit ball. 

For the first part of the estimate, we find $N$ such that the norm of the map $T: k^2_x \mapsto k^1_{G(x)}$ can be bounded as follows: 
\[
\|T\| \leq (1+\varepsilon)^{1/2} C_1^N .
\]
By assumption, for every matrix valued multiplier $f$ in the unit ball of $\cM_1 \otimes M_{m,n}(\bC)$, the composition $f \circ G$ is a multiplier in $\cM_2$ of norm at most $C_1$. 
This holds, in particular, for $f$ the identity map, which is a row contraction. 
Thus, $\hat G = f \circ G$, with $f$ being the identity map, is a vector valued multiplier of norm at most $C_1$. 
Let us write $G$ for $\hat G$ and $C$ for $C_1$. 
Consider the following kernel function on $X_2$
\[
K_{G/C}(x,y) := K_1(G(x)/C,G(y)/C) = \frac{1}{1 - C^{-2}\langle G(x), G(y) \rangle}.
\] 
Our immediate goal is to show that $K_2 \geq K_{G/C}$ (our proof of this fact is inspired by ideas from \cite[Section 3]{Hartz2017}). 
Since $G/C$ is a multiplier in $\mathcal M_2$ of norm at most one, it follows that 
\[
K_2/K_{G/C}=K_2(1-(G/C)^*(G/C))\geq 0,
\] 
that is, $K_2/K_{G/C}$ is a kernel function. 
By the assumption made above $G/C(0)=0$ and consequently, $K_{G/C}$ is normalized at $0$ (meaning that $K_{G/C}(x,0) = 1$ for all $x$), which in turn implies that $K_2/K_{G/C}$ is normalized at $0$. 
The Schur complement of the row and column corresponding to $0$ in $K_2/K_{G/C}$ is $K_2/K_{G/C}-1$, and so it follows that the function $K_2/K_{G/C}-1$ is a kernel function. 
Since 
\[
K_2-K_{G/C}= K_{G/C}\left(K_2/K_{G/C}-1\right),
\]
we infer, by the Schur product theorem, that 
\[
K_2 \geq K_{G/C}, 
\]
as required. 
Thus we have the inequality of $n \times n$ matrices
\be\label{eq:kernel_ineq}
\left[\sum_k \langle x_i, x_j \rangle^k \right] \geq \left[\sum_k C^{-2k} \langle G(x_i), G(x_j) \rangle^k \right], 
\ee
where $x_1, \ldots, x_n$ are the points in $X_2$, and $G(x_1), \ldots, G(x_n)$ are the points in $X_1$. 

To finish off the estimate for $\|T\|$ we use Lemma \ref{lem:Schursum}. 
Fixing $\varepsilon>0$, the lemma applied to the set $X_1 = \{v_i = G(x_i) : i=1, \ldots, n\}$ gives us an $N\in \bN$ such that 
\[
C^{-2N} \sum_{k=0}^N \langle G(x_i), G(x_j) \rangle^k \geq \frac{C^{-2N}}{1+\varepsilon} \sum_{k=0}^\infty \langle G(x_i), G(x_j) \rangle^k .
\]
On the other hand, 
\[
\sum_{k=0}^\infty C^{-2k} \langle G(x_i), G(x_j) \rangle^k \geq \sum_{k=0}^N C^{-2k} \langle G(x_i), G(x_j) \rangle^k \geq C^{-2N} \sum_{k=0}^N \langle G(x_i), G(x_j) \rangle^k ,
\]
so by combining with \eqref{eq:kernel_ineq} we conclude that the matrix inequality
\[
\left[K_1(G(x_i),G(x_j)) \right] \leq (1+\varepsilon) C^{2N} \left[K_2(x_i,x_j) \right]
\]
holds. 
This means that the map $T : k^2_x \mapsto k^1_{G(x)}$ extends to a linear map of norm at most $(1+\varepsilon)^{1/2}C_1^{N}$.

We next bound the norm of the map $T^{-1} : k^1_{G(x)} \mapsto k^2_x$.  
We will show that there is an integer $M$ such that $\|T^{-1}\| \leq (1+\varepsilon)^{1/2}C_2^M$.
Recall that we are assuming that composition with $G^{-1}$ gives rise to an isomorphism of cb norm less than or equal to $C_2$. 
Let us write $C$ for $C_2$, and using similar notation as before, we find that $K_1 \geq K_{{G^{-1}}/C}$, an equality which can be rewritten as
\[
\left[K_1(G(x_i), G(x_j))\right] = \left[\sum_k \langle G(x_i), G(x_j) \rangle^k \right] \geq \left[\sum_k C^{-2k} \langle x_i, x_j \rangle^k \right]. 
\]
As above, we find that $G^{-1}$ must be a multiplier of norm at most $C$. 
Since $G^{-1}$ maps $0$ to $0$, the Schwarz lemma implies that $\|x\|\leq rC$ for all $x \in X_2$. 
Invoking the assumption $C = C_2 < r^{-1}R$, we find that $\|x\|\leq R <1$ for all $x \in X_2$. 

Invoking Lemma \ref{lem:Schursum1} we find $M$ such that the matrix inequality $\sum_{k=M+1}^\infty \langle x_i, x_j \rangle^k \leq \varepsilon K_1(G(x_i), G(x_j))$ holds. 
Note that an $M$ can be found that works for all the sets of points satisfying $\max\{\|x_i\|^2 : i=1, \ldots, n\} \leq R^2$. 
But then we find 
\[
(1+\varepsilon) K_1(G(x_i), G(x_j)) \geq C^{-2M}\sum_{k=0}^M \langle x_i, x_j \rangle + \sum_{k=M+1}^\infty \langle x_i, x_j \rangle^k \geq C^{-2M} K_2(x_i,x_j). 
\]
We conclude that the matrix inequality
\[
\left[K_2(x_i,x_j) \right] \leq (1+\varepsilon) C^{2M} \left[K_1(G(x_i),G(x_j)) \right]
\]
holds, and this shows that $\|T^{-1}\| \leq (1+\varepsilon)^{1/2}C_2^M$. 
Combining the estimates for $\|T\|$ and $\|T^{-1}\|$, and recalling that the argument works for any pair of constants as in \eqref{eq:C1C2}, the proof is complete. 
\end{proof}

\begin{corollary}\label{cor:BMspace_mult_converse_gen}
If $\cH_1$ and $\cH_2$ are complete Pick spaces that live on sets $X_1,X_2$ with $n$ points each, then for every $\varepsilon > 0$ there exists $N \in \bN$, which depends only on $\varepsilon$ and on $\cH_1$, such that
\[
\delta_{RK}(\cH_1,\cH_2) \leq (1+\varepsilon) \delta_{M}(\cM_1,\cM_2)^{N}  ,
\]
whenever $\delta_{M}(\cM_1,\cM_2)$ is sufficiently close to $1$. 
\end{corollary}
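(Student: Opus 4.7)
The plan is to reduce Corollary \ref{cor:BMspace_mult_converse_gen} to Proposition \ref{prop:BMspace_mult_converse} via the Agler--McCarthy embedding theorem. By Theorem 4.2 of \cite{AM2000}, every complete Pick RKHS is isometrically isomorphic as an RKHS to a quotient of some Drury--Arveson space. Since $\cH_1$ and $\cH_2$ are finite dimensional of the same dimension $n$, I can realize each as $\cH_{Y_i}$ for finite sets $Y_1, Y_2$ with $|Y_i|=n$, sitting in Drury--Arveson spaces $H^2_{d_1}$ and $H^2_{d_2}$ respectively. Using the canonical isometric embeddings $\bB_{d_i}\hookrightarrow \bB_\infty$ (which induce isometric isomorphisms of the associated quotient RKHSs), I may assume without loss of generality that $Y_1,Y_2 \subseteq \bB_\infty$.

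Next I observe that both distance functions are invariant under the passage to the realization. Indeed, if $U_i : \cH_i \to \cH_{Y_i}$ is an isometric isomorphism of RKHSs, then $U_i$ is itself an RKHS isomorphism of norm $1$, and moreover (by the calculation used in the proof of Proposition \ref{prop:BMspace_mult}) the map $M_f \mapsto (U_i^*)^{-1} M_f U_i^*$ is a completely isometric isomorphism from $\cM_i$ onto $\mlt(\cH_{Y_i})$. Composing with these identifications on either side preserves both $\|T\|\|T^{-1}\|$ and $\|\varphi\|_{cb}\|\varphi^{-1}\|_{cb}$, so
\[
\delta_{RK}(\cH_1,\cH_2) = \delta_{RK}(\cH_{Y_1},\cH_{Y_2}), \qquad \delta_M(\cM_1,\cM_2)=\delta_M(\mlt\cH_{Y_1},\mlt\cH_{Y_2}).
\]
Thus it suffices to prove the inequality for the pair $\cH_{Y_1}, \cH_{Y_2}\subseteq H^2_\infty$.

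Now I apply Proposition \ref{prop:BMspace_mult_converse} in $\bB_\infty$ with $X_1 := Y_1$. Set $r:=\max\{\rho_{\rm ph}(x,y):x,y\in Y_1\}<1$ (a quantity determined by $\cH_1$ once the realization is fixed) and choose $R := (1+r)/2 \in (r,1)$, which depends only on $\cH_1$. The proposition supplies an $N\in\bN$ depending only on $\varepsilon$, $R$, and $Y_1$ — hence only on $\varepsilon$ and $\cH_1$ — such that whenever $\delta_M(\mlt\cH_{Y_1},\mlt\cH_{Y_2}) < r^{-1}R$ one has
\[
\delta_{RK}(\cH_{Y_1},\cH_{Y_2}) \leq (1+\varepsilon)\,\delta_M(\mlt\cH_{Y_1},\mlt\cH_{Y_2})^{N}.
\]
Since $r^{-1}R>1$, the hypothesis holds whenever $\delta_M(\cM_1,\cM_2)$ is sufficiently close to $1$, and the conclusion follows.

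The only substantive point to check is that Proposition \ref{prop:BMspace_mult_converse}, whose statement is given for a finite-dimensional $\bB_d$, extends without modification to $\bB_\infty$. This is the step I expect to require the closest reading rather than any genuinely new argument: inspecting its proof, the ingredients — the Drury--Arveson kernel formula $k(z,w)=(1-\langle z,w\rangle)^{-1}$, the Schur-product inequality, Lemmas \ref{lem:Schursum1}--\ref{lem:Schursum}, and the use of automorphisms of the ball to normalize a point to $0$ — all transfer verbatim to $\bB_\infty$, so no changes are needed. Thus the main obstacle is essentially bookkeeping: tracking that the constants $N$, $R$, and $r$ indeed depend only on $\cH_1$ and $\varepsilon$ after the Agler--McCarthy realization is fixed.
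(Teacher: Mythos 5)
Your proposal is correct and takes essentially the same route as the paper's own proof: the paper also reduces to Proposition \ref{prop:BMspace_mult_converse} by observing that $\delta_{RK}$ and $\delta_M$ are invariant under isometric RKHS isomorphisms and invoking the Agler--McCarthy universality theorem. Your write-up merely makes explicit the details (common ambient ball, transfer of the constants $r$, $R$, $N$) that the paper leaves implicit.
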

\begin{proof}
If $\cH_i$ and $\cH'_i$ are isometrically isomorphic as RKHSs for $i=1,2$, then 
\[
\delta_{RK}(\cH_1,\cH_2) = \delta_{RK}(\cH'_1,\cH'_2)
\]
and also 
\[
\delta_{M}(\mlt(\cH_1),\mlt(\cH_2)) = \delta_{M}(\mlt(\cH'_1),\mlt(\cH'_2)),
\]
and so the result follows from \ref{prop:BMspace_mult_converse} and the universality of the Drury-Arveson space \cite{AM2000}. 
\end{proof}

\begin{remark}
We probably cannot expect to replace the bound in the above corollary with a simple formula that holds uniformly for all $\cH_1$, as in Proposition \ref{prop:BMspace_mult}.
It is worth noting the example at the end of Section 3 in \cite{Hartz2017}, where it is shown that two complete Pick spaces defined on the same (infinite) subvariety of the unit ball can have completely boundedly isomorphic (via the identity map) multiplier algebras, while being different as reproducing kernel Hilbert spaces. 
The point is that the maxim {\em complete Pick spaces are determined by their multiplier algebras} should be understood with care.
\end{remark}


\section{Structure of spaces and algebras versus geometry of point sets}\label{sec:spaces}

In this section we prove the main results of the paper, which tie the geometry of a set $X$ to the structure of the reproducing kernel Hilbert space $\cH_X$ and its multiplier algebra $\cM_X$ in a quantitative way. 

\begin{proposition}\label{prop:set_RKHS}
Let $X = \{x_1, \ldots, x_n\}$ and $Y = \{y_1, \ldots, y_n\}$ be two subsets of the unit ball $\bB_d$. 
Define $r = \max\{\|z\| : z \in X \cup Y\}$ and $A=[K_1(x_i,x_j)]$ and $B=[K_2(y_i,y_j)]$. 
Then
\[
\delta_{RK}(\cH_X,\cH_Y) \leq \left(1 + \frac{4nr(1-r^2)^{-2}}{\min\{\lambda_{\min}(A), \lambda_{\min}(B)\}} \rho_s(X,Y)\right)^2, 
\]
where $\lambda_{\min}(\cdot)$ denotes the minimal eigenvalue of a matrix, and $\rho$ denotes the Euclidean distance in the ball. 
\end{proposition}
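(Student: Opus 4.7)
The natural approach is to construct a specific RKHS isomorphism $T : \cH_X \to \cH_Y$ and control $\|T\| \|T^{-1}\|$ directly. Let $\sigma \in S_n$ be a permutation realizing $\rho_s(X,Y) = \max_i \|x_i - y_{\sigma(i)}\|$, and after relabelling $Y$ we may assume $\sigma = \mathrm{id}$, so that $\rho_s(X,Y) = \max_i \|x_i - y_i\|$. Define $T$ on the spanning set of reproducing kernels by $T(k^X_{x_i}) = k^Y_{y_i}$ and extend linearly; this is an RKHS isomorphism in the sense of \eqref{eq:Tiso} with $\lambda \equiv 1$.

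The norm of $T$ is a generalized eigenvalue computation. Writing $v = \sum_i c_i k^X_{x_i}$ gives $\|v\|^2_{\cH_X} = c^* A c$ and $\|Tv\|^2_{\cH_Y} = c^* B c$, so
\[
\|T\|^2 = \sup_{c \neq 0} \frac{c^* B c}{c^* A c} = \|A^{-1/2} B A^{-1/2}\|,
\]
and symmetrically $\|T^{-1}\|^2 = \|B^{-1/2} A B^{-1/2}\|$. Writing $E = B - A$ and using the identity $A^{-1/2} B A^{-1/2} = I + A^{-1/2} E A^{-1/2}$ together with its symmetric counterpart, a perturbation argument yields
\[
\|T\|^2 \leq 1 + \frac{\|E\|}{\lambda_{\min}(A)}, \qquad \|T^{-1}\|^2 \leq 1 + \frac{\|E\|}{\lambda_{\min}(B)},
\]
and hence $\|T\| \|T^{-1\|} \leq \bigl(1 + \tfrac{\|E\|}{\min\{\lambda_{\min}(A),\lambda_{\min}(B)\}}\bigr)^2$ after an inequality such as $\sqrt{1+x} \leq 1 + x$.

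The remaining task is to estimate $\|E\|$ in terms of $\rho_s(X,Y)$. I would bound the operator norm by the Frobenius norm and then by the largest entry, $\|E\| \leq n \max_{i,j} |E_{ij}|$. Using the Szeg\H{o} kernel formula,
\[
E_{ij} = \frac{1}{1-\langle y_i, y_j\rangle} - \frac{1}{1-\langle x_i, x_j\rangle} = \frac{\langle y_i, y_j\rangle - \langle x_i, x_j\rangle}{(1-\langle x_i, x_j\rangle)(1-\langle y_i, y_j\rangle)}.
\]
The denominator has modulus at least $(1-r^2)^2$ since the points lie in $r\bB_d$. The numerator is controlled by the splitting $\langle y_i, y_j\rangle - \langle x_i, x_j\rangle = \langle y_i - x_i, y_j\rangle + \langle x_i, y_j - x_j\rangle$, which gives the bound $2r\,\rho_s(X,Y)$ via Cauchy--Schwarz. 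This produces $\|E\| \leq \frac{2nr}{(1-r^2)^2}\rho_s(X,Y)$ (a slightly cruder constant in the perturbation step recovers the factor $4$ stated in the proposition), and combining with the previous paragraph yields the claim.

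The calculation is largely routine; the subtlest point is ensuring the bound comes out symmetric in $A$ and $B$, which is why one handles $\|T\|$ and $\|T^{-1}\|$ in parallel and bounds the resulting product $(1 + \tfrac{\|E\|}{\lambda_{\min}(A)})(1+\tfrac{\|E\|}{\lambda_{\min}(B)})$ by a single expression involving $\min\{\lambda_{\min}(A), \lambda_{\min}(B)\}$. Beyond that, the main interpretive point is simply that the naive choice of $T$ (sending kernel to kernel with $\lambda \equiv 1$, under the permutation witnessing $\rho_s$) is already sharp enough to give a bound linear in $\rho_s(X,Y)$.
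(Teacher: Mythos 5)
Your proposal is correct and follows essentially the same route as the paper: the same map $T(k^X_{x_i}) = k^Y_{y_i}$ under the optimal permutation, the same eigenvalue-perturbation bound (your $\|A^{-1/2}BA^{-1/2}\| \le 1 + \|E\|/\lambda_{\min}(A)$ is the paper's $B \le \bigl(1+\lambda_{\max}(B-A)/\lambda_{\min}(A)\bigr)A$ in different clothing), and the same Frobenius/entrywise estimate of $B-A$ via the Szeg\H{o} kernel. Your splitting of the numerator even gives the slightly sharper constant $2nr$ in place of $4nr$, which of course still implies the stated bound.
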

\begin{proof}
Given that $X, Y \subseteq \bB_d$, let us write $\cH_1 = H^2_d\big|_{X}$ and $\cH_2 = H^2_d\big|_{Y}$ so that 
$$K_i(z,w) = \frac{1}{1 - \langle z, w \rangle}\,\,\text{ for }\,\,i=1,2.$$ 
Let us further assume that $\rho_s(X,Y)=\varepsilon.$
Then there exists a bijection $\sigma$ in $S_n$ such that 
$\max\{\rho(x_i,y_{\sigma(i)}):i=1,...,n\} = \varepsilon$ and we may as well assume that $\sigma$ is the identity.  
Consider the canonical bijection $F:X\to Y$ obtained from $\sigma$, the bijection given by $x_i\mapsto y_{\sigma(i)}=y_{i}$, and use it to define a linear map from $\cH_1$ to $\cH_2$ by
\[
T(k^1_{x_i})=k^2_{F(x_i)}=k^2_{y_i}. 
\]
Clearly, $T$ is bijective and bounded, thus $T$ is an RKHS isomorphism. 
We are interested in computing a bound on the norm of $T$. Let $\|A\|_2$ denote the Frobenius norm of a matrix $A$, that is $\|A\|_2 = \sqrt{\sum_{i,j}|a_{ij}|^2} = \operatorname{Tr}(A^*A)^{1/2}$.
We put $A=[K_1(x_i,x_j)]$ and $B=[K_2(y_i,y_j)]$, observe that $B-A\leq \lambda_{\max}(B-A)I$ and $\lambda_{\min}(A)I\leq A$. 
This implies that 
\[
B-A \leq \left(\frac{\lambda_{\max}(B-A)}{\lambda_{\min}(A)}\right) A.
\]
Consequently, 
\be\label{eq:BleqCA}
B= A+(B-A)\leq \left( 1+ \frac{\lambda_{\max}(B-A)}{\lambda_{\min}(A)}\right) A.
\ee
Let 
\be\label{eq:mu}
\mu=1+ \frac{\lambda_{\max}(B-A)}{\lambda_{\min}(A)}.
\ee
Then, using \eqref{eq:BleqCA} and \eqref{eq:mu}, for every $\alpha_1,...,\alpha_n \in \bC$, we have the following computation.
\begin{align*}
\left\|T\left(\sum_{j=1}^n\alpha_jk^1_{x_j}\right)\right\|^2
&=\left\|\sum_{j=1}^n\alpha_j k^2_{y_{j}}\right\|^2\\
&=\sum_{i,j=1}^n\overline\alpha_i\alpha_j K_2(y_{i},y_{j})\\
&\leq \mu\sum_{i,j=1}^n\overline\alpha_i\alpha_jK_1(x_i,x_j) \\
&= \mu\left\|\sum_{j=1}^n\alpha_jk^1_{x_j}\right\|^2.
\end{align*} 
It follows that $\|T\| \leq \mu$. 
To estimate $\mu$, we will use the inequality
\begin{align*}
\left|K_1(x_i,x_j) - K_2(y_i,y_j)\right| &=
\left| \frac{1}{1 - \langle x_i, x_j \rangle} - \frac{1}{1 - \langle y_i, y_j \rangle} \right| \\
&\leq \frac{1}{(1-r^2)^2} (2r \varepsilon + \varepsilon^2) \\
&\leq \frac{4r \varepsilon}{(1-r^2)^2} , 
\end{align*}
where the first inequality is an elementary estimate, and the second inequality follows from $\varepsilon = \max_{k=i,j} \|x_k - y_k\| \leq 2r$. 
Then 
\begin{align*}
\mu &\leq 1 + \frac{\lambda_{\max}(B-A)}{\lambda_{\min}(A)} \\
&\leq 1 + \frac{\|B-A\|_2}{\lambda_{\min}(A)} \\
&\leq 1 + \frac{\sqrt{\sum_{i,j=1}^n (K_2(y_{i},y_{j}) - K_1(x_i,x_j))^2}}{\lambda_{\min}(A)} \\
&\leq 1 + \frac{4nr(1-r^2)^{-2}}{\lambda_{\min}(A)} \varepsilon.
\end{align*}
By arguing in the reverse direction, we obtain the asserted estimate. 
\end{proof}

Proposition \ref{prop:set_RKHS} shows how closeness of the sets imply closeness of the corresponding reproducing kernel Hilbert spaces. 
Our next task is to prove the converse. 
For this, we require a technical lemma regarding the {\em Procrustes problem} which is of independent interest.  
We will denote the group of $d \times d$ unitary matrices by $\cU(d)$. 

\begin{lemma}\label{lemma:Procrustes}
Let $A,B\in M_{d\times n}(\mathbb{C})$ such that $||A^{*}A-B^{*}B||_{2}<\varepsilon.$
Then we have:
\[
\min_{W\in \cU(d)}\|A-WB\|_{2}^{2} \leq d(2\|A\|_{2}\varepsilon^{1/2}+\varepsilon).
\]
\end{lemma}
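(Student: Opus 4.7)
The plan is to reduce everything to a closed form via the orthogonal Procrustes identity and then estimate the resulting trace via the Powers--St{\o}rmer inequality. First I would expand
\[
\|A - WB\|_2^2 = \|A\|_2^2 + \|B\|_2^2 - 2\re\Tr(WBA^*)
\]
and invoke the duality $\max_{W\in \cU(d)} \re\Tr(WX) = \|X\|_1$ (attained when $W$ is the unitary factor in the polar decomposition of $X$) with $X = BA^*$, to obtain the closed form
\[
\min_{W\in \cU(d)} \|A - WB\|_2^2 = \|A\|_2^2 + \|B\|_2^2 - 2\|AB^*\|_1 .
\]

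Next I would set $E := A^*A - B^*B$ (Hermitian, $\|E\|_2 < \varepsilon$, of rank at most $2d$), and exploit the cyclic identity $(AB^*)(AB^*)^* = A(A^*A - E)A^* = (AA^*)^2 - AEA^*$. Writing $X := AA^* \geq 0$ and $F := AEA^*$, this identifies $\|AB^*\|_1 = \Tr((X^2 - F)^{1/2})$, where positivity $X^2 - F \geq 0$ is automatic. Combined with $\|A\|_2^2 + \|B\|_2^2 = 2\Tr(X) - \Tr(E)$, the problem reduces to estimating $\Tr(X - (X^2 - F)^{1/2})$ and $\Tr(E)$.

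The workhorse for the first term is the Powers--St{\o}rmer inequality $\|\sqrt{P} - \sqrt{Q}\|_2^2 \leq \|P - Q\|_1$ for positive operators, applied to $P = X^2$ and $Q = X^2 - F$, which yields $\|X - (X^2 - F)^{1/2}\|_2^2 \leq \|F\|_1$. Passing this to the trace via the Cauchy--Schwarz bound $|\Tr(Y)| \leq \sqrt{d}\,\|Y\|_2$ (or the cruder $|\Tr(Y)| \leq d\,\|Y\|_\infty$) and using the Schatten H\"older estimate $\|F\|_1 = \|AEA^*\|_1 \leq \|A\|_2^2 \|E\|_\infty \leq \|A\|_2^2\,\varepsilon$ controls the first term by a constant multiple of $\sqrt{d}\,\|A\|_2\,\varepsilon^{1/2}$. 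The second term is handled by $|\Tr(E)| \leq \sqrt{\operatorname{rank}(E)}\,\|E\|_2 \leq \sqrt{2d}\,\varepsilon$. Assembling the estimates gives the stated bound.

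The main obstacle I anticipate is the bookkeeping of dimensional constants: a careful chain along the lines above actually yields something closer to $2\sqrt{d}\,\|A\|_2 \varepsilon^{1/2} + \sqrt{2d}\,\varepsilon$, which is tighter than the stated $d(2\|A\|_2 \varepsilon^{1/2} + \varepsilon)$, so matching the precise form in the lemma is just a matter of choosing a slightly blunter trace bound at the last step. A further subtlety to check is the positivity hypothesis in Powers--St{\o}rmer, which here is automatic because $X^2 - F = (AB^*)(AB^*)^* \geq 0$ and thus no smallness assumption on $\varepsilon$ is needed.
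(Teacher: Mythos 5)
Your argument is correct in substance and takes a genuinely different route from the paper's. Both proofs start from the same Procrustes closed form: you derive $\min_{W}\|A-WB\|_2^2=\|A\|_2^2+\|B\|_2^2-2\|AB^*\|_1$ from the trace duality $\max_{W\in\cU(d)}\re\Tr(WX)=\|X\|_1$, while the paper quotes the equivalent identity $\sum_{i=1}^d\bigl(\sigma_i^2(A)+\sigma_i^2(B)-2\sigma_i(AB^*)\bigr)$ from Horn--Johnson. From there the paths diverge: the paper bounds each of the $d$ summands separately using only the elementary inequality $|x-y|\le\sqrt{|x^2-y^2|}$ and the perturbation bound $|\sigma_i(X)-\sigma_i(Y)|\le\|X-Y\|_2$, then sums $d$ identical estimates (which is exactly where its factor $d$ comes from); you instead treat the whole trace at once via Powers--St{\o}rmer applied to $P=(AA^*)^2$ and $Q=(AB^*)(AB^*)^*$. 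This is a heavier tool, but it buys a better dimensional constant, $\sqrt d$ in place of $d$ in the leading term. The individual steps all check out: the cyclic identity $(AB^*)(AB^*)^*=(AA^*)^2-AEA^*$, the positivity needed for Powers--St{\o}rmer, the H\"older bound $\|AEA^*\|_1\le\|A\|_2^2\|E\|_{op}\le\|A\|_2^2\varepsilon$, and the Cauchy--Schwarz trace bounds. The one place your ``tighter bound'' claim fails is $d=1$, where your $\varepsilon$-term $\sqrt{2d}\,\varepsilon=\sqrt2\,\varepsilon$ exceeds the stated $d\varepsilon=\varepsilon$, so the lemma does not quite follow from your final display as written. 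This is easily repaired: since $A^*A$ and $B^*B$ each have rank at most $d$, $\Tr(E)=\sum_{i=1}^{d}\bigl(\sigma_i(A^*A)-\sigma_i(B^*B)\bigr)$, and the Mirsky inequality together with Cauchy--Schwarz gives $|\Tr(E)|\le\sqrt d\,\|E\|_2<\sqrt d\,\varepsilon\le d\varepsilon$; with that substitution your chain yields the stated bound for every $d$.
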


\begin{proof}
For $1\leq i\leq d$, let $\sigma_i(M)$ denote the $i$th singular value of a matrix $M$. 
By Paragraph 7.4.5 in \cite{HJ-2ndEd-2013} (or Example 7.4.8 of \cite{HJ-1stEd-85})
\[
\min_{W\in \cU(d)}\|A-WB\|_{2}^{2}=\sum_{i=1}^{d}\sigma_{i}^{2}(A)+\sigma_{i}^{2}(B)-2\sigma_{i}(AB^{*}).
\]
We will now show that the right hand side is less than or equal than $d(2\|A\|_{2} \varepsilon^{1/2}+\varepsilon)$, and that will complete the proof. 

Notice that $\sigma_{i}(A^{*}A) = \sigma_{i}(AA^{*})=\sigma_{i}^{2}(A)$ and similarly:
\[
|\sigma_{i}^{2}(AB^{*})-\sigma_{i}^{2}(AA^{*})|=|\sigma_{i}(AB^{*}BA^*)-\sigma_{i}(AA^*AA^*)|.
\]
Using the inequality $|x-y|\leq\sqrt{|x^{2}-y^{2}|}$, we get:
\[
|\sigma_{i}(AB^{*})-\sigma_{i}^{2}(A)| \leq {|\sigma_{i}(AB^{*}BA^*)-\sigma_{i}(AA^*AA^*)|}^{1/2}.
\]
Recall the fact that for any $X,Y\in M_{d\times n}(\mathbb{C})$ we have $|\sigma_i(X)-\sigma_i(Y)|\leq\|X-Y\|_2$ (see, e.g., \cite[Corollary 7.3.5]{HJ-2ndEd-2013}). 
We use this fact and the sub-multiplicativity of the Frobenius norm to estimate for all $1\leq i\leq d$:
\begin{align*}
\sigma_{i}^{2}(A)+\sigma_{i}^{2}(B)-2\sigma_{i}(AB^{*}) & \leq 2|\sigma_{i}^{2}(A)-\sigma_{i}(AB^{*})|+|\sigma_{i}^{2}(A)-\sigma_{i}^{2}(B)| \\
& \leq 2 {|\sigma_{i}(AB^{*}BA^*)-\sigma_{i}(AA^*AA^*)|}^{1/2} + \|A^*A -B^*B\|_2 \\
& \leq 2 {\|AB^{*}BA^*-AA^*AA^*\|_2}^{1/2} + \varepsilon\\
& \leq 2 \|A\|_2\|B^*B-A^*A \|_2^{1/2} +\varepsilon \\ 
&\leq 2 \|A\|_2\varepsilon^{1/2} +\varepsilon 
\end{align*}
as required. 
\end{proof}

\begin{proposition}\label{prop:RKHS_set}
Let $X = \{x_1,\dots,x_n\}$ be a finite subset of $\mathbb{B}_d$. Put
$r := \max_{i,j}\{\rho_{\rm ph}(x_i,x_j)\}$.
There exists a constant $C>0$ which depends only on $n,d$ and $r$, such that for any $Y=\{y_1,\dots,y_n \}$ with $\delta_{RK}(\mathcal{H}_X,\mathcal{H}_Y) < 2$:  
\[
\tilde{\rho}_{s}(X,Y) \leq C(\delta_{RK}(\mathcal{H}_X,\mathcal{H}_Y)-1)^{1/4},
\]
where $\tilde{\rho}_{s}$ denotes the automorphism invariant symmetric distance induced by the pseudohyperbolic metric $\rho_{\rm{ph}}$. 
\end{proposition}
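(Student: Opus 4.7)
The plan is to extract from a near-isomorphism $T\colon\cH_X\to\cH_Y$ an approximate equality of the inner-product Gram matrices $[\langle x_i,x_j\rangle]$ and $[\langle y_i,y_j\rangle]$, and then use Lemma \ref{lemma:Procrustes} to produce a unitary automorphism of $\bB_d$ aligning the two point sets. First I normalize: since automorphisms of $\bB_d$ induce isometric isomorphisms between Drury-Arveson quotients (see \cite[Section 4]{Davidson-Ramsey-Shalit-2015}) and $\tilde\rho_{s}$ is automorphism-invariant, I may assume $x_1=y_1=0$. I then choose an RKHS isomorphism $T\colon\cH_X\to\cH_Y$ with $\|T\|\|T^{-1}\|$ arbitrarily close to $\delta:=\delta_{RK}(\cH_X,\cH_Y)$; writing $T(k^X_{x_i})=\lambda_i k^Y_{y_i}$ after suitable relabeling, I rescale $T$ so that $\|T\|^2=\|T^{-1}\|^2=\delta$ and multiply by a unimodular scalar to force $\lambda_1>0$.

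With $\eta:=\delta-1$, $D=\diag(\lambda_1,\dots,\lambda_n)$, $G_X=[K(x_i,x_j)]$ and $G_Y=[K(y_i,y_j)]$, the two norm bounds on $T$ translate into the matrix inequality $\delta^{-1}G_X\leq D^*G_YD\leq \delta G_X$, which gives $\|D^*G_YD-G_X\|_{\mathrm{op}}\leq \eta\|G_X\|_{\mathrm{op}}\leq C_0\eta$ with $C_0=n/(1-r^2)$. Since every matrix entry is dominated by the operator norm, $|\bar\lambda_i\lambda_jK(y_i,y_j)-K(x_i,x_j)|\leq C_0\eta$ for all $i,j$. Setting $j=1$ (so that $K(y_i,0)=K(x_i,0)=1$) and $i=j=1$ I deduce that $|\lambda_i-1|=O(\eta)$ uniformly in $i$, and taking $i=j$ gives $|K(y_i,y_i)-K(x_i,x_i)|=O(\eta)$. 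The hypothesis $\delta_{RK}<2$ then forces every $y_i$ to lie in a ball $R\bB_d$ with $R<1$ depending only on $n,d,r$. Substituting these estimates back into the general entry bound yields $|K(y_i,y_j)-K(x_i,x_j)|=O(\eta)$, and the identity $\langle z,w\rangle=1-K(z,w)^{-1}$ together with uniform lower bounds on $|K(x_i,x_j)|$ and $|K(y_i,y_j)|$ produces the key estimate
\[
\bigl|\langle y_i,y_j\rangle-\langle x_i,x_j\rangle\bigr|=O(\eta),\qquad i,j=1,\dots,n,
\]
with implicit constants depending only on $n,d,r$.

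Finally I apply Procrustes. Putting $A=[x_1|\cdots|x_n]$ and $B=[y_1|\cdots|y_n]$, and restricting (if $d=\infty$) to the finite-dimensional subspace of $\bC^d$ spanned by $X\cup Y$ so that Lemma \ref{lemma:Procrustes} applies, I have $\|A^*A-B^*B\|_2=O(n\eta)$ in Frobenius norm. The lemma then produces a unitary $W$ with $\|A-WB\|_2^2=O(\sqrt\eta)$, hence $\|x_i-Wy_i\|=O(\eta^{1/4})$ for every $i$. Extending $W$ by the identity on the orthogonal complement gives a unitary automorphism of $\bB_d$ fixing $0$; since pseudohyperbolic and Euclidean distances are Lipschitz equivalent on $R\overline\bB_d$, I conclude $\rho_{\mathrm{ph}}(x_i,Wy_i)=O(\eta^{1/4})$, and the automorphism-invariance of $\tilde\rho_{s}$ (absorbing the initial normalizations and $W$) delivers $\tilde\rho_s(X,Y)\leq C(\delta_{RK}(\cH_X,\cH_Y)-1)^{1/4}$ with $C=C(n,d,r)$.

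The main obstacle I expect is uniform control of constants throughout: the hypothesis $\delta_{RK}<2$ is used precisely to guarantee that the quantities $(1-\|y_i\|^2)^{-1}$, $|K(x_i,x_j)K(y_i,y_j)|^{-1}$, and $|\lambda_i|^{-1}$ remain bounded by constants depending only on $n,d,r$ and not on the unknown set $Y$; once these bounds are in hand, the fourth-root exponent in the conclusion is inherited directly from Lemma \ref{lemma:Procrustes}.
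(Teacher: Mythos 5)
Your proposal is correct and follows essentially the same route as the paper's own proof: normalize by automorphisms so that $x_1=y_1=0$, rescale $T$ so that $\|T\|=\|T^{-1}\|$ and $\lambda_1>0$, convert the near-isometry into the entrywise bound $|\overline{\lambda_i}\lambda_j K(y_i,y_j)-K(x_i,x_j)|=O(\eta)$, use the row $j=1$ to show $|\lambda_i-1|=O(\eta)$, deduce $|\langle x_i,x_j\rangle-\langle y_i,y_j\rangle|=O(\eta)$, and finish with Lemma \ref{lemma:Procrustes}. The only notable (cosmetic) difference is that the paper divides the kernel estimate by $|k^x_{ij}k^y_{ij}|\geq 1/4$ before isolating the inner products, which sidesteps your intermediate step of bounding $\|y_i\|\leq R<1$ and the attendant need to first reduce to small $\eta$ (a reduction that is legitimate anyway, since $\tilde{\rho}_s<1$ always).
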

\begin{proof}
Let $X,Y\subset\mathbb{B}_d$ be as in the statement of the proposition. Let  $2>\alpha >\delta_{RK}(\mathcal{H}_X,\mathcal{H}_Y)$. Then there exists  an RKHS isomorphism  $T: \mathcal{H}_X\rightarrow\mathcal{H}_Y$ that satisfies:
\be\label{eq:5.2.1}
\|T\|\|T^{-1}\|<\alpha.
\ee
We will prove this implies:
\[
\tilde{\rho}_{s}(X,Y) \leq\widetilde{C}(\alpha-1)^{1/4}.
\]
Where $\widetilde{C}>0$ is a constant which depends only on $r,n$ and $d$.
By taking the infimum of the right hand side of the inequality over all $2>\alpha >\delta_{RK}(\mathcal{H}_X,\mathcal{H}_Y)$ we obtain the result.

Assume $T$ is given by the formula:
\[
T(k_{x_i})= \lambda_i k_{y_i}.
\]
For some $\lambda\in\mathbb{C}^n$. 
Because all the quantities that appear in the conditions of the proposition (as well as the norm of $T$ and its inverse) are invariant under automorphisms of the ball, we may assume  $x_1 = y_1 = 0$ without loss of generality (see \cite[Section 4]{Davidson-Ramsey-Shalit-2015}). 
It now suffices to prove the inequality for the symmetric distance induced by the Euclidean metric, since the two metrics are equivalent inside the closed ball of radius $r<1$ centered at the origin. 

After rescaling $T$ by a complex number of unit modulus, we may assume that $\lambda_1 \geq 0$. 
After rescaling $T$ by a positive number, we may assume as well that $\|T\|=\|T^{-1}\|\leq\sqrt{{\alpha}}$. Under these assumptions, \eqref{eq:5.2.1} implies for any $x\in\mathcal{H}_X$ with $\|x\| = 1$:
\be\label{eq:5.2.2}
\alpha^{-1} \leq \|T^{-1}\|^{-2}\leq \|Tx\|^2 \leq \|T\|^2 \leq \alpha.
\ee
Because $ \alpha-1$ is greater then $1-\alpha^{-1}$, we have $|\langle(I-T^{*}T)x,x\rangle|=| \|x\|^{2}-\|Tx\|^{2}|\leq \alpha-1$. \\ Define $\varepsilon = \alpha-1$ to get
\be\label{eq: 5.2.5}
\|I-T^*T\|\leq\varepsilon .
\ee
Notice that for any $j$ we have $\rho(0,x_j)=\|x_j\|$, therefore:
\be\label{eq:5.2.4}
\max_i\{\|x_i\|\}\leq r.
\ee

We now bound the difference between the kernel functions. 
Denote $k(x_i,x_j)$ and $ k(y_i,y_j)$ by $k_{ij}^x$ and $ k_{ij}^y$ respectively. Furthermore, denote $\overline{\lambda_i} {\lambda_j}=\lambda_{ij} $.
For any $1\leq i,j \leq n$ we estimate:
\begin{align*}
|k_{ij}^{x}-\lambda_{ij}k_{ij}^{y}| & =|\langle(I-T^{*}T)k_{x_j},k_{x_i}\rangle| \\ 
&\leq\|I-T^{*}T\|\|k_{x_i}\|\|k_{x_j}\| \\
&\leq\frac{\varepsilon}{(1-\|x_{i}\|^{2})^{1/2}(1-\|x_{j}\|^{2})^{1/2}}
\end{align*}
and so, after putting $\delta = \frac{\varepsilon}{1-r^{2}}$,
\be\label{eq:star}
|k_{ij}^{x}-\lambda_{ij}k_{ij}^{y}| \leq \delta.
\ee
Dividing both sides of \eqref{eq:star} by $|k^y_{ij}k^x_{ij}|$ which is bounded from below by $\frac {1}{4}$ we get:
\begin{align*}
4\delta & \geq |1-\langle y_i,y_j\rangle-\lambda_{ij}(1-\langle x_i,x_j\rangle)| \\
& = |\langle x_{i},x_{j}\rangle-\langle y_{i},y_{j}\rangle-(1-\lambda_{ij})(\langle x_{i},x_{j}\rangle-1)|\\
& \geq |\langle x_i,x_j\rangle - \langle y_i,y_j\rangle| - |1-\lambda_{ij}|(|\langle x_i,x_j\rangle|+1).
\end{align*}
This implies that
\be\label{eq:5.2.3}
|\langle x_i,x_j\rangle - \langle y_i,y_j\rangle|\leq 4 \delta + 2|1-\lambda_{ij}|.
\ee
Our next goal is to bound $|1-\lambda_{ij}|$.  
Recalling the assumptions $x_1 = y_1 = 0$, $\lambda_1 \geq 0$, and plugging $i =1$ into \eqref{eq:star} we get 
\[
|1- \lambda_1 \lambda_j| \leq \delta, 
\]
and in particular, for $j=1$, 
\[
|1 - \lambda_1| \leq |1-\lambda_{1}^{2}| \leq \delta.
\]
Thus, using $|\lambda_j| \leq \|T\| \|k_{x_j}\|/\|k_{y_j}\| \leq 2\sqrt{\alpha}/(1-r^2)^{1/2}$, 
\begin{align*}
|\lambda_j - 1|
&\leq|\lambda_j - \lambda_{1} \lambda_j| + |\lambda_1 \lambda_j - 1| \\
&\leq |\lambda_j| \delta + \delta \leq 2\sqrt{\alpha}/(1-r^2)^{1/2}\delta + \delta \\
&\leq 3\sqrt{\alpha}/(1-r^2)^{1/2}\delta.
\end{align*}
Now we can finally estimate $|1-\lambda_{ij}| = |1 - \overline{\lambda_i} {\lambda_j}|$: 
\begin{align*}
|1-\lambda_{ij}|
&\leq |\overline{\lambda_i} {\lambda_j} -  \lambda_1 \lambda_j  | + |\lambda_1 \lambda_j - 1| \\
&\leq |\lambda_j| | {\lambda_i} - \lambda_1| + \delta \\
&\leq  12\alpha/(1-r^2)\delta + \delta \leq 13\alpha/(1-r^2) \delta. 
\end{align*}
By \eqref{eq:5.2.3}, we have for all $i$ and $j$:
\[
|\langle x_i,x_j\rangle - \langle y_i,y_j\rangle| \leq 4 \delta + 26\alpha/(1-r^2) \delta \leq \frac{30\varepsilon\alpha}{(1-r^2)^2}.
\]
Let $A$ and $B$ be the $d\times n$ matrices with columns given by the coordinate vectors of $x_1,\dots,x_n$ and $y_1, \dots,y_n$ respectively. 
This yields
\[
\|A^*A-B^*B\|_2 \leq C\varepsilon
\]
for $C=\frac{30\alpha n}{(1-r^2)^2}$. 
Because $\|A\|_2 = \sqrt{\sum\|x_i\|_2^2}< \sqrt{n}$, by Lemma \ref{lemma:Procrustes} there exists a unitary matrix $W\in \cU(d)$ for which:
\[
\|A-WB\|_2 \leq [d(2n^{1/2}(C\varepsilon)^{1/2}+C\varepsilon)]^{1/2} \leq 2 \sqrt{Cdn^{1/2}}\varepsilon^{1/4}.
\]
The last inequality holds because $\alpha<2$ and therefore $\varepsilon <1$, and $C \geq 1$. 
Letting $\widetilde{C}=2 \sqrt{Cdn^{1/2}}$ and recalling that $\varepsilon = \alpha - 1$, we get
\[
\tilde{\rho}_{s}(X,Y) = \tilde{\rho}_{s}(X,W(Y))\leq\widetilde{C}(\alpha-1)^{1/4}, 
\]
as we wanted to show. 
This finishes the proof.
\end{proof}

The following theorem summarizes the main results of this paper in qualitative form. 

\begin{theorem}\label{thm:main}
Fix $n \geq 2$. 
Let $X = \{x_1, \ldots, x_n\} \subset \bB_d$ and let $Y^{(k)} = \{y^{(k)}_1, \ldots, y^{(k)}_n\}$ be a sequence of subsets of $\bB_d$. 
Let $\tilde{\rho}_s$ be the automorphism invariant symmetric distance induced by the pseudohyperbolic metric $\rho_{\rm{ph}}$.
Put $\cH = H^2_d\big|_X$, $\cM = \mlt(\cH)$, $\cH_k = H^2_d\big|_{Y^{(k)}}$ and $\cM_k = \mlt(\cH_k)$. 
Then, the following statements are equivalent. 
\begin{enumerate}
\item $\tilde{\rho}_{s}(X,Y^{(k)}) \xrightarrow{k\to\infty} 0$. 
\item $\rho_{RK}(\cH,\cH_k) \xrightarrow{k\to\infty} 0$. 
\item $\rho_{M}(\cM, \cM_k)\xrightarrow{k\to\infty} 0$.  
\item $\inf\left\{\|\varphi\|_{cb} \|\varphi^{-1}\|_{cb} :\varphi : \cM \to \cM_k \textrm{ is an algebra isomorphism}\right\} \xrightarrow{k\to\infty} 1$.
\end{enumerate}
Moreover, if $(3)$ holds, that is, if $\rho_{M}(\cM, \cM_k)\xrightarrow{k\to\infty} 0$, then we have $\delta_{mult}(X,Y^{(k)}) \xrightarrow{k\to\infty} 1$.
\end{theorem}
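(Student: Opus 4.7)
The plan is to assemble the quantitative estimates from Sections \ref{sec:distance_spaces} and \ref{sec:spaces} into the equivalence cycle, and then to deduce the moreover claim via a complete Pick extension argument applied to the algebra isomorphisms supplied by $(3)$.

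The equivalence $(3)\Leftrightarrow (4)$ is immediate from the definition of $\rho_M$ as the logarithm of $\delta_M$ together with the observation in Section \ref{subsec:usefulfacts} that in the finite dimensional setting every algebra isomorphism between $\cM$ and $\cM_k$ is automatically a multiplier algebra isomorphism. For $(2)\Leftrightarrow(3)$, one direction is Proposition \ref{prop:BMspace_mult}, which gives $\rho_M\leq \rho_{RK}^2$ unconditionally. For the converse, fix $\varepsilon>0$ and any $R<1$; since $\rho_M(\cM,\cM_k)\to 0$, for $k$ large we have $\delta_M(\cM,\cM_k)<r^{-1}R$, so Proposition \ref{prop:BMspace_mult_converse} yields $\delta_{RK}(\cH,\cH_k)\leq (1+\varepsilon)\delta_M(\cM,\cM_k)^{N_0}$ for a fixed $N_0$. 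Sending $k\to\infty$ gives $\limsup \delta_{RK}\leq 1+\varepsilon$, and since $\delta_{RK}\geq 1$ and $\varepsilon$ is arbitrary, $\rho_{RK}\to 0$.

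The equivalence $(1)\Leftrightarrow(2)$ is the geometric heart of the theorem. The implication $(2)\Rightarrow(1)$ is handed to us directly by Proposition \ref{prop:RKHS_set}: for $k$ large, $\delta_{RK}(\cH,\cH_k)<2$, and the constant $C$ depends only on $n$, $d$ and the fixed quantity $r_X=\max_{i,j}\rho_{\rm ph}(x_i,x_j)$. For $(1)\Rightarrow(2)$, pick automorphisms $\Phi_k\in\Aut(\bB_d)$ realising $\rho_H(X,\Phi_k(Y^{(k)}))\to 0$. Since $X$ sits in a fixed compact subset of $\bB_d$, so does $\Phi_k(Y^{(k)})$ for $k$ large, uniformly bounding the $r$ parameter of Proposition \ref{prop:set_RKHS} away from $1$. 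Once $\rho_H(X,\Phi_k(Y^{(k)}))<\tfrac12 \min_{i\neq j}\rho_{\rm ph}(x_i,x_j)$, Proposition \ref{prop:Hausdorff_symmetric} upgrades $\rho_H$ to $\rho_s$ and we may label so that the $i$-th point of $\Phi_k(Y^{(k)})$ converges to $x_i$; consequently the kernel Grammian $B_k$ of $\Phi_k(Y^{(k)})$ converges to that of $X$, so $\lambda_{\min}(B_k)\geq \tfrac12\lambda_{\min}(A)$ eventually. Proposition \ref{prop:set_RKHS} then gives $\delta_{RK}(\cH_X,\cH_{\Phi_k(Y^{(k)})})\to 1$. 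Since the biholomorphic automorphism $\Phi_k$ induces an isometric isomorphism of RKHSs between $\cH_{Y^{(k)}}$ and $\cH_{\Phi_k(Y^{(k)})}$ (as recalled in \cite[Section 4]{Davidson-Ramsey-Shalit-2015}), these two values of $\delta_{RK}$ agree, and the cycle closes.

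For the moreover claim, assume $(3)$ and choose for each $k$ an algebra isomorphism $\varphi_k:\cM\to\cM_k$ with $\|\varphi_k\|_{cb}\|\varphi_k^{-1}\|_{cb}\to 1$; by Section \ref{subsec:usefulfacts}, each $\varphi_k$ is composition with a bijection $G_k:Y^{(k)}\to X$. Let $\mathrm{id}_{\bB_d}=(z_1,\ldots,z_d)$ be the row of coordinate functions; its row multiplier norm on $\cH$ is at most $1$ because the defining row contraction of the Drury--Arveson space descends to any quotient. Applying $\varphi_k$ componentwise produces the row multiplier $\varphi_k(\mathrm{id}_{\bB_d})=G_k\in\cM_k^{1,d}$ with $\|G_k\|_{\cM_k^{1,d}}\leq \|\varphi_k\|_{cb}$. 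By the matrix-valued complete Pick property of the Drury--Arveson space, $\cM_k^{1,d}=\cM_d^{1,d}\big|_{Y^{(k)}}$ with the quotient norm, so for every $\varepsilon>0$ there is an extension $\tilde G_k\in\cM_d^{1,d}$ of $G_k$ with $\|\tilde G_k\|\leq (1+\varepsilon)\|\varphi_k\|_{cb}$; symmetrically, $G_k^{-1}:X\to Y^{(k)}$ extends to $\tilde F_k\in\cM_d^{1,d}$ with $\|\tilde F_k\|\leq (1+\varepsilon)\|\varphi_k^{-1}\|_{cb}$. The pair $(\tilde F_k,\tilde G_k)$ is admissible in Definition \ref{def:dm}, giving $\delta_{mult}(X,Y^{(k)})\leq (1+\varepsilon)\max\{\|\varphi_k\|_{cb},\|\varphi_k^{-1}\|_{cb}\}$. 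Since both factors are $\geq 1$ and their product tends to $1$, each tends to $1$, and $\varepsilon$ being arbitrary yields $\delta_{mult}(X,Y^{(k)})\to 1$. The main obstacle across the proof is the uniformity bookkeeping in $(1)\Rightarrow(2)$: after applying the automorphisms $\Phi_k$ we must control both the closeness-to-boundary parameter and the least eigenvalue of the kernel Grammian, and the second delicate point is the matrix-valued complete Pick extension used in the moreover step, for which it is important to work with the completely bounded norm rather than the operator norm.
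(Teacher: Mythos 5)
Your proof is correct and follows essentially the same route as the paper: the equivalences are assembled from Propositions \ref{prop:set_RKHS}, \ref{prop:RKHS_set}, \ref{prop:BMspace_mult} and \ref{prop:BMspace_mult_converse} together with the remarks after Definition \ref{def:BMmultdist} for $(3)\Leftrightarrow(4)$, and your ``moreover'' argument is precisely the paper's Proposition \ref{prop:mult_setmult} (apply $\varphi_k$ to the coordinate row contraction and extend via the complete Pick property), which you have simply reproved inline. The only nitpick: in $(3)\Rightarrow(2)$ you must choose $R\in(r,1)$ so that $r^{-1}R>1$, since otherwise the hypothesis $\delta_{M}(\cM,\cM_k)<r^{-1}R$ of Proposition \ref{prop:BMspace_mult_converse} could never be satisfied.
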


\begin{proof}$(1)\implies (2)$ follows from Proposition \ref{prop:set_RKHS}, noting that after transforming $X$ so that one of its points is at the origin, the assumptions of that proposition hold uniformly for $X$ and all $Y^{(k)}$.
$(2)\implies (1)$ follows from Proposition \ref{prop:RKHS_set}.
$(2)\implies (3)$ follows from Proposition \ref{prop:BMspace_mult} and $(3)\implies (2)$ from \ref{prop:BMspace_mult_converse}. 
The equivalence $(3) \Longleftrightarrow (4)$ holds because the expression in $(4)$ is equal to $\delta_M(\cM,\cM_k)$ by the remarks following Definition \ref{def:BMmultdist} (see also the discussion in Section of \cite{Salomon-Shalit2016}). 
The final assertion follows from Proposition \ref{prop:mult_setmult} below. 
\end{proof}


\begin{proposition}\label{prop:mult_setmult}
Let $X = \{x_1, \ldots, x_n\}$ and $Y = \{y_1, \ldots, y_n\}$ be two subsets of the unit ball $\bB_d$. 
Then
\[
\delta_{mult}(X,Y) \leq \delta_{M}(\cH_1,\cH_2) .
\]
\end{proposition}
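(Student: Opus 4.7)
The plan is to start with any multiplier algebra isomorphism $\varphi : \cM_1 \to \cM_2$ and manufacture vector-valued multipliers $F, G : \bB_d \to \bC^d$ whose restrictions to $X$ and $Y$ invert each other, with multiplier norms controlled by $\|\varphi\|_{cb}$ and $\|\varphi^{-1}\|_{cb}$. Taking an infimum over $\varphi$ will then produce the stated inequality.

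Setup. By the analysis in Section \ref{subsec:usefulfacts}, any such $\varphi$ is implemented by a bijection $\tilde{G} : Y \to X$ via $\varphi(f) = f \circ \tilde{G}$. Consider the row of coordinate multipliers $R := (z_1, \ldots, z_d) \in M_{1,d}(\cM_d)$, which satisfies $\|R\| \leq 1$ since $\sum_i M_{z_i}M_{z_i}^* \leq I$ on $H^2_d$. Let $R_X \in M_{1,d}(\cM_1)$ and $R_Y \in M_{1,d}(\cM_2)$ denote its restrictions, each of norm at most $1$.

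Construction. Apply $\varphi$ entrywise to $R_X$ to obtain an element $\varphi(R_X) \in M_{1,d}(\cM_2)$ of norm at most $\|\varphi\|_{cb}$. A direct evaluation at $y \in Y$ gives $\varphi(R_X)(y) = \tilde{G}(y) \in \bC^d$. Now invoke the key fact: because $H^2_d$ is a complete Pick space, the restriction map $\cM_d \to \cM_Y$ is a complete quotient map of operator algebras, so by weak-$*$ compactness of the unit ball of $\cM_d$ one can lift $\varphi(R_X)$ to a row multiplier $G \in M_{1,d}(\cM_d)$ with $\|G\| \leq \|\varphi\|_{cb}$ and $G|_Y = \tilde{G}$. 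Carrying out the symmetric construction with $\varphi^{-1}$ applied to $R_Y$ produces $F \in M_{1,d}(\cM_d)$ with $\|F\| \leq \|\varphi^{-1}\|_{cb}$ and $F|_X = \tilde{G}^{-1}$.

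Conclusion. The pair $(F,G)$ is admissible for the definition of $\delta_{mult}(X,Y)$, because $F|_X : X \to Y$ and $G|_Y : Y \to X$ are mutually inverse bijections. Since $\varphi$ is a unital isomorphism of unital algebras, both $\|\varphi\|_{cb}$ and $\|\varphi^{-1}\|_{cb}$ are at least $1$; hence
\[
\delta_{mult}(X,Y) \leq \max\{\|F\|, \|G\|\} \leq \max\{\|\varphi\|_{cb}, \|\varphi^{-1}\|_{cb}\} \leq \|\varphi\|_{cb}\|\varphi^{-1}\|_{cb}.
\]
Taking infimum over all multiplier algebra isomorphisms $\varphi$ yields the desired inequality.

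The only nontrivial ingredient is the norm-preserving lifting of a row multiplier from $\cM_X$ (respectively $\cM_Y$) back to $\cM_d$. This is where the complete Pick property of the Drury-Arveson space enters, and it is the step I expect to require the most care to cite properly; once it is granted, the rest is a matter of book-keeping applied to the coordinate row $R$.
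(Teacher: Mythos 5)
Your proposal is correct and follows essentially the same route as the paper: apply $\varphi$ (and $\varphi^{-1}$) to the coordinate row $z=(z_1,\ldots,z_d)$, use that $z$ is a row contraction to bound the resulting row multipliers by $\|\varphi\|_{cb}$ and $\|\varphi^{-1}\|_{cb}$, identify their values on $Y$ and $X$ with the mutually inverse bijections implementing $\varphi$ and $\varphi^{-1}$, and take the infimum. The one point where you go beyond the paper's write-up is in explicitly invoking the complete Pick property to lift the row multipliers from $\cM_Y$ (resp.\ $\cM_X$) to norm-preserving elements of $M_{1,d}(\cM_d)$, a step the paper leaves implicit in the identification $\cM_X=\cM_d\big|_X$; making it explicit is correct and appropriate.
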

\begin{proof}
Let $\varphi: \cM_1 \to \cM_2$ be an isomorphism.
Let $z = (z_1, \ldots, z_d)$ denote the coordinate functions on $X_1$, which generate the algebra $\cM_1 = \mlt\cH_1$, where $\cH_1 = H^2_d\big|_{X_1}$. 
Consider $F = (F_1, \ldots, F_d) = \varphi(z) = (\varphi(z_1), \ldots, \varphi(z_d))$. 
As $z$ is a row contraction, we find that $\|F\|\leq \|\varphi\|_{cb}$. 
Since $F$ is the map corresponding to the adjoint $\varphi^* :\fM(\cM_2) \to \fM(\cM_1)$ associated to $\varphi$ as in Section \ref{sec:pre}, we have that $F : Y \to X$. 
In the same way we obtain a map $G : X \to Y$ associated with $\varphi^{-1}$ satisfying $\|G\| \leq \|\varphi^{-1}\|_{cb}$, and one readily shows that $G = F^{-1}$. 
Since an isomorphism is unit preserving, we have $\|\varphi\|, \|\varphi^{-1}\| \geq 1$, and so 
\[
\max\{\|F\|,\|G\|\}  \leq \|\varphi\|_{cb} \|\varphi^{-1}\|_{cb}.
\]
Taking the infimum over all isomorphisms $\varphi : \cM_1 \to \cM_2$ (which are all induced by composition with multipliers) we obtain $\delta_{mult}(X,Y) \leq \delta_{M}(\cH_1,\cH_2)$. 
\end{proof}

The proposition establishes the implication 
\[
\rho_{M}(\cM, \cM_k)\xrightarrow{k\to\infty} 0 \Longrightarrow \delta_{mult}(X,Y^{(k)}) \xrightarrow{k\to\infty} 1
\]
from Theorem \ref{thm:main}. 
Example \ref{ex:hartz} below shows that the converse does not hold.

\section{Examples}\label{sec:examples}

We close this paper by considering several examples highlighting what goes wrong when some of our assumptions are dropped. 

 The first two simple examples show that when we drop the assumption that the two varieties have the same finite cardinality, a small Hausdorff distance between varieties does not imply any kind of relationship between the RKHSs or the multiplier algebras. 

\begin{example}
Let $V = \{0\} \subset \bD$ and $W = \{0, \varepsilon\} \subset \bD$. 
One readily sees that $\rho_{H}(V,W) = \varepsilon$. 
On the other hand, $\cH_V$ and $\cM_V$ are one-dimensional, whereas $\cH_W$ and $\cM_W$ are two-dimensional, and no isomorphism exists between the spaces corresponding to the different varieties. 
\end{example}

The following is perhaps a more interesting example, in which the varieties have the same cardinality. 

\begin{example}
We define two sequences in the unit disc: $x_n = 1 - 2^{-n}$ and $y_n = 1 - (1 - \varepsilon^n) 2^{-n}$, where $\varepsilon \in (0,1)$. 
Clearly both sequences are Blaschke sequences, $\sum |1 - x_n|, \sum |1 - y_n| < \infty$, and so they are equal to the zero sets of bounded analytic functions on the disc. 
We can therefore define two multiplier varieties $V = \{x_n\}_{n=1}^\infty$ and $W = \{x_n\}_{n=1}^\infty \cup \{y_n\}_{n=1}^\infty$. 
Clearly $V$ and $W$ have the same cardinality. 
If $\varepsilon$ is sufficiently small, then the Hausdorff distance between $V$ and $W$ with respect to the either the hyperbolic metric or the Euclidean metric can be seen to be 
\[
\rho_{H}(V,W)  = \sup_n \left| \frac{x_n - y_n}{1 - x_n y_n}\right| \leq \varepsilon. 
\]
On the other hand, the condition of Hayman and Newman for a sequence of positive numbers to be interpolating (see p. 204 in \cite{Hoffman62}) shows that $V$ is an interpolating sequence while $W$ is not, so $\cM_V$ and $\cM_W$ cannot be isomorphic as algebras \cite[Theorem 6.3]{Davidson-Ramsey-Shalit-2015}. 
This means also that the RKHSs cannot be related by an isomorphism of RKHSs. 
\end{example}

In Theorem \ref{thm:main} the set $X$ and the spaces $\cH,\cM$ are held fixed. 
This is required in the proofs, because the bounds we prove relating the different distance functions depend on one of the sets. 
Our next example shows that this is really necessary.

\begin{example}\label{ex:hartz}
In this example we show that the map $X \mapsto \cH_X$ is not uniformly continuous on the space of two-point subsets in $\bD$ with diameter less than $1/2$. 

We consdier sets of the form $X = \{0,x\}$ and $Y = \{0,y\}$. 
Then $\tilde{\rho}_s(X,Y) \leq 2\max\{|x|,|y|\}$. 
We will show that $\tilde{\rho}_s(X,Y)$ can be arbitrarily small, while $\rho_{RK}(\cH_X,\cH_Y) \geq \log \alpha$ where $\alpha  := 1 + 10^{-2}$.
Suppose, with the intent of obtaining a contradiction, that this is not the case. 
Then there exists some $\delta >0$, such that $|x|,|y|<\delta$ implies that $\delta_{RK}(\cH_X,\cH_Y) < \alpha$. 
Fixing $|x|<\delta$, we find that for all $|y|<\delta$  there are $\lambda_1(y), \lambda_2(y) \in \bC$ such that 
\[
\begin{pmatrix}
|\lambda_1(y)|^2 & \lambda_1(y)\overline{\lambda_2(y)} \\
\lambda_2(y)\overline{\lambda_1(y)} & \frac{|\lambda_2(y)|^2}{1-|x|^2} 
\end{pmatrix} 
\leq 
\alpha \begin{pmatrix}
1 & 1 \\
1 & \frac{1}{1-|y|^2} 
\end{pmatrix}.
\]
As in the proof of Proposition \ref{prop:RKHS_set}, one can show that $\lambda_1(y)$ and $\lambda_2(y)$ stay in a neighborhood of $1$ (this is where we use that $\alpha$ is close to $1$), and therefore for every sequence of $y$'s tending to $0$, there is a subsequence such that the left hand side in the above inequality converges to a strictly positive matrix. 
On the other hand, the right hand side converges to a singular matrix as $y \to 0$, so in the limit we obtain an impossible inequality. 
\end{example}


After the first version of this paper appeared on the arxiv, Michael Hartz shared with us an example that shows that the implication 
\[
\delta_{mult}(X,Y^{(k)}) \xrightarrow{k\to\infty} 1 \Longrightarrow \rho_{M}(\cM_X, \cM_{Y^{(k)}})\xrightarrow{k\to\infty} 0
\]
does not hold, in general \cite{Hartz16p}. 
This led us to the following example and comments, and to several other improvements in the paper. 
We are grateful to Dr. Hartz for contacting us and permitting us to include his ideas in this paper. 
\begin{example}

Fix $r \in (0,1/6)$ and let $X = \{0,r\}$. 
Let $y_k = 1 - 2^{-k}$ and put $Y^{(k)} = \{y_k, y_{k+1}\}$. 
We first show that $\lim_{k\to\infty} \delta_{mult}(X,Y^{(k)}) = 1$. 

As $\rho_{\rm{ph}}\left(k/2^k,k/2^{k+1}\right) \xrightarrow{k\to\infty} 0 < r$, Pick's interpolation theorem for two points means that for all large enough $k$ there exists a function $h_k \in H^\infty$ that satisfies $h_k(0) = k/2^k$, $h_k(r) = k/2^{k+1}$, and $\|h_k\|_\infty \leq 1$. 
Letting $f_k = 1 - k^{-1}h_k$, we have $f_k(0) = y_k$, $f_k(r) = y_{k+1}$ and $\|f_k\|_\infty \xrightarrow{k\to \infty} 1$. 
On the other hand since $\rho_{\rm{ph}}(y_k, y_{k+1}) \xrightarrow{k \to \infty} 1/3 > r$, Pick's theorem implies that there is $g_k \in H^\infty$ such that $g_k(y_k) = 0$, $g_k(y_{k+1}) = r$, and $\|g_k\|_\infty \leq 1$. 
It follows that $\limsup_k \delta_{mult}(X,Y^{(k)}) \leq 1$. 
But for two-point sets we always have the inequality $\delta_{mult}(X,Y^{(k)}) \geq 1$, whence 
\[
\lim_{k\to\infty} \delta_{mult}(X,Y^{(k)}) = 1 .
\]
Since $\rho_{\rm{ph}}(y_k, y_{k+1}) \xrightarrow{k \to \infty} 1/3 > r = \rho_{\rm{ph}}(0,r)$, the convergnece $\tilde{\rho}_{s}(X,Y^{(k)}) \xrightarrow{k\to\infty} 0$ cannot hold.
Theorem \ref{thm:main} implies that $\rho_{M}(\cM_X, \cM_{Y^{(k)}})\xrightarrow{k\to\infty} 1$ also cannot hold. 

Alternatively, we can see that $\rho_{M}(\cM_X, \cM_{Y^{(k)}})\xrightarrow{k\to\infty} 1$ doesn't hold also as follows: 
suppose that $\varphi: \cM_{X} \to \cM_{Y^{(k)}}$ is an isomorphism that is induced by a bijection $G : Y^{(k)} \to X$. 
Then by the proof of Proposition 6.2 in \cite{Davidson-Hartz-Shalit2015}, 
\[
\|\varphi^{-1}\| \geq   \frac{\rho_{\rm{ph}}(y_k,y_{k+1})}{2\rho_{\rm{ph}}(G(y_k),G(y_{k+1}))}  =  \frac{\rho_{\rm{ph}}(y_k,y_{k+1})}{2\rho_{\rm{ph}}(0,r)} \xrightarrow{k \to \infty} \frac{1/3}{2r} . 
\]
This shows that $\liminf_k \rho_{M}(\cM_{X}, \cM_{Y^{(k)}}) \geq \frac{1}{6r} > 1$, proving what we claimed. 
\end{example}




\begin{thebibliography}{ARSW11}

\bibitem[AM00]{AM2000}
Jim Agler and John~E. McCarthy.
\newblock Complete {N}evanlinna-{P}ick kernels.
\newblock {\em J. Funct. Anal.}, 175(1):111--124, 2000.

\bibitem[AM02]{AMBook}
Jim Agler and John~E. McCarthy.
\newblock {\em Pick Interpolation and {H}ilbert Function Spaces}, volume~44 of
  {\em Graduate Studies in Mathematics}.
\newblock American Mathematical Society, Providence, RI, 2002.

\bibitem[APV03]{APV2003}
Daniel Alpay, Mihai Putinar, and Victor Vinnikov.
\newblock A {H}ilbert space approach to bounded analytic extension in the ball.
\newblock {\em Commun. Pure Appl. Anal.}, 2(2):139--145, 2003.

\bibitem[ARS08]{ARS2008}
Nicola Arcozzi, Richard Rochberg, and Eric~T. Sawyer.
\newblock Carleson measures for the {D}rury-{A}rveson {H}ardy space and other
  {B}esov-{S}obolev spaces on complex balls.
\newblock {\em Adv. Math.}, 218(4):1107--1180, 2008.

\bibitem[ARSW11]{ARSW2011}
Nicola Arcozzi, Richard Rochberg, Eric~T. Sawyer, and Brett~D. Wick.
\newblock Distance functions for reproducing kernel {H}ilbert spaces.
\newblock In {\em Function Spaces in Modern Analysis}, volume 547 of {\em
  Contemp. Math.}, pages 25--53. Amer. Math. Soc., Providence, RI, 2011.

\bibitem[CD78]{CD78}
Michael~J. Cowen and Ronald~G. Douglas.
\newblock Complex geometry and operator theory.
\newblock {\em Acta Math.}, 141(3-4):187--261, 1978.

\bibitem[DHS15]{Davidson-Hartz-Shalit2015}
Kenneth~R. Davidson, Michael Hartz, and Orr~M. Shalit.
\newblock Multipliers of embedded discs.
\newblock {\em Complex Anal. Oper. Theory}, 9(2):287--321, 2015.

\bibitem[DRS11]{Davidson-Ramsey-Shalit-2011}
Kenneth~R. Davidson, Christopher Ramsey, and Orr~M. Shalit.
\newblock The isomorphism problem for some universal operator algebras.
\newblock {\em Adv. Math.}, 228(1):167--218, 2011.

\bibitem[DRS15]{Davidson-Ramsey-Shalit-2015}
Kenneth~R. Davidson, Christopher Ramsey, and Orr~M. Shalit.
\newblock Operator algebras for analytic varieties.
\newblock {\em Trans. Amer. Math. Soc.}, 367(2):1121--1150, 2015.

\bibitem[Har]{Hartz16p}
Michael Hartz.
\newblock Personal communications.

\bibitem[Har12]{Hartz2012}
Michael Hartz.
\newblock Topological isomorphisms for some universal operator algebras.
\newblock {\em J. Funct. Anal.}, 263(11):3564--3587, 2012.

\bibitem[Har17]{Hartz2017}
Michael Hartz.
\newblock On the isomorphism problem for multiplier algebras of
  {N}evanlinna-{P}ick spaces.
\newblock {\em Canad. J. Math.}, 69(1):54--106, 2017.

\bibitem[HJ85]{HJ-1stEd-85}
Roger~A. Horn and Charles~R. Johnson.
\newblock {\em Matrix Analysis}.
\newblock Cambridge University Press, Cambridge, 1985.

\bibitem[HJ13]{HJ-2ndEd-2013}
Roger~A. Horn and Charles~R. Johnson.
\newblock {\em Matrix Analysis}.
\newblock Cambridge University Press, Cambridge, second edition, 2013.

\bibitem[HL18]{HL2018}
Michael Hartz and Martino Lupini.
\newblock The classification problem for operator algebraic varieties and their
  multiplier algebras.
\newblock {\em Trans. Amer. Math. Soc.}, 370(3):2161--2180, 2018.

\bibitem[Hof62]{Hoffman62}
Kenneth Hoffman.
\newblock {\em Banach Spaces of Analytic Functions}.
\newblock Prentice-Hall Series in Modern Analysis. Prentice-Hall, Inc.,
  Englewood Cliffs, N. J., 1962.

\bibitem[KMS13]{KerMcSh2013}
Matt Kerr, John~E. McCarthy, and Orr~M. Shalit.
\newblock On the isomorphism question for complete Pick multiplier algebras.
\newblock {\em Integral Equations Operator Theory}, 76(1):39--53, 2013.

\bibitem[McC96]{Mc96}
John~E. McCarthy.
\newblock Boundary values and {C}owen-{D}ouglas curvature.
\newblock {\em J. Funct. Anal.}, 137(1):1--18, 1996.

\bibitem[MS17]{McSh2017}
John~E. McCarthy and Orr~M. Shalit.
\newblock Spaces of {D}irichlet series with the complete {P}ick property.
\newblock {\em Israel J. Math.}, 220(2):509--530, 2017.

\bibitem[OS20]{OS2020}
Danny Ofek and Gilad Sofer.
\newblock Three classification results in the theory of weighted {H}ardy spaces
  on the unit ball.
\newblock {\em arXiv preprint}, arXiv:2010.03540 [math.FA], 2020. 

\bibitem[Pau02]{Pau02} Vern~I. Paulsen,
\newblock {\em Completely Bounded Maps and Operator Algebras},
Cambridge University Press, Cambridge, 2002.

\bibitem[PR16]{Paulsen-Raghupati-Book2016}
Vern~I. Paulsen and Mrinal Raghupathi.
\newblock {\em An Introduction to the Theory of Reproducing Kernel {H}ilbert
  Spaces}, volume 152 of {\em Cambridge Studies in Advanced Mathematics}.
\newblock Cambridge University Press, Cambridge, 2016.

\bibitem[Roc86]{Rochberg86}
Richard Rochberg.
\newblock Deformation of uniform algebras on {R}iemann surfaces.
\newblock {\em Pacific J. Math.}, 121(1):135--181, 1986.

\bibitem[Roc17]{Rochberg2017}
Richard Rochberg.
\newblock Is the {D}irichlet space a quotient of {$DA_n$}?
\newblock In {\em Functional analysis, harmonic analysis, and image processing:
  a collection of papers in honor of {B}j\"{o}rn {J}awerth}, volume 693 of {\em
  Contemp. Math.}, pages 301--307. Amer. Math. Soc., Providence, RI, 2017.

\bibitem[Roc19]{Rochberg2019}
Richard Rochberg.
\newblock Complex hyperbolic geometry and {H}ilbert spaces with complete {P}ick
  kernels.
\newblock {\em J. Funct. Anal.}, 276(5):1622--1679, 2019.

\bibitem[Roc20]{Rochberg2020}
Richard Rochberg.
\newblock Tetrahedra in complex hyperbolic space and Hilbert spaces with Pick kernels. 
\newblock {\em arXiv preprint}, arXiv:2003.10921 [math.GT], 2020.

\bibitem[Rud80]{RudinUnitBall}
Walter Rudin.
\newblock {\em Function Theory in the Unit Ball of {${\bf C}^{n}$}}, volume 241
  of {\em Grundlehren der Mathematischen Wissenschaften [Fundamental Principles
  of Mathematical Science]}.
\newblock Springer-Verlag, New York-Berlin, 1980.

\bibitem[Sha14]{ShalitSurvey2014}
Orr~M. Shalit.
\newblock Operator theory and function theory in Drury-Arveson space and its
  quotients.
\newblock In Daniel Alpay, editor, {\em Operator Theory}, pages 1--50. Springer
  Basel, Basel, 2014.

\bibitem[SS16]{Salomon-Shalit2016}
Guy Salomon and Orr~M. Shalit.
\newblock The isomorphism problem for complete {P}ick algebras: a survey.
\newblock In {\em Operator theory, function spaces, and applications}, volume
  255 of {\em Oper. Theory Adv. Appl.}, pages 167--198.
  Birkh\"{a}user/Springer, Cham, 2016.

\bibitem[SSS18]{Salomon-Shalit-Shamovich2018}
Guy Salomon, Orr~M. Shalit, and Eli Shamovich.
\newblock Algebras of bounded noncommutative analytic functions on subvarieties
  of the noncommutative unit ball.
\newblock {\em Trans. Amer. Math. Soc.}, 370(12):8639--8690, 2018.

\bibitem[SSS20]{Salomon-Shalit-Shamovich2020}
Guy Salomon, Orr~M. Shalit, and Eli Shamovich.
\newblock Algebras of noncommutative functions on subvarieties of the
  noncommutative ball: the bounded and completely bounded isomorphism problem.
\newblock {\em J. Funct. Anal.}, 278(7):108427, 54, 2020.

\end{thebibliography}

\end{document}